\numberwithin{equation}{section}
\definecolor{violet}{rgb}{0.580,0.,0.827}
\newcommand{\ols}[1]{\mskip.5\thinmuskip\overline{\mskip-.5\thinmuskip {#1} \mskip-.5\thinmuskip}\mskip.5\thinmuskip} 
\newcommand\dD{\mathrm{d}}
\def\eps{\varepsilon }
\newcommand{\sgn}{\text{\rm sgn}}
\newcommand{\beq}{\begin{equation}}
\newcommand{\eeq}{\end{equation}}
\newcommand{\beqa}{\begin{eqnarray}}
\newcommand{\eeqa}{\end{eqnarray}}
\newcommand\br{\begin{remark}}
\newcommand\er{\end{remark}}
\newcommand\bp{\begin{pmatrix}}
\newcommand\ep{\end{pmatrix}}
\newcommand{\be}{\begin{equation}}
\newcommand{\ee}{\end{equation}}
\newcommand\ba{\begin{equation}\begin{aligned}}
\newcommand\ea{\end{aligned}\end{equation}}
\newcommand\ds{\displaystyle}
\newcommand{\beg}{\begin{example}}
\newcommand{\eeg}{\end{exaplem}}
\newcommand{\bpr}{\begin{proposition}}
\newcommand{\epr}{\end{proposition}}
\newcommand{\bt}{\begin{theorem}}
\newcommand{\et}{\end{theorem}}
\newcommand{\bc}{\begin{corollary}}
\newcommand{\ec}{\end{corollary}}
\newcommand{\bl}{\begin{lemma}}
\newcommand{\el}{\end{lemma}}
\newcommand{\bd}{\begin{definition}}
\newcommand{\ed}{\end{definition}}
\newcommand{\brs}{\begin{remarks}}
\newcommand{\ers}{\end{remarks}}
\newtheorem{theorem}{Theorem}[section]
\newtheorem{proposition}[theorem]{Proposition}
\newtheorem{corollary}[theorem]{Corollary}
\newtheorem{lemma}[theorem]{Lemma}
\newtheorem{remark}[theorem]{Remark}
\newtheorem{definition}[theorem]{Definition}
\newtheorem{example}[theorem]{Example}
\newcommand{\R}{{\mathbb R}}
\newcommand\bx{{\bm x}}
\newcommand\bv{{\bm v}}
\newcommand\cJ{{\mathcal J}}
\newcommand\cK{{\mathcal K}}
\newcommand\cL{{\mathcal L}}
\newcommand\cM{{\mathcal M}}
\numberwithin{equation}{section}
\numberwithin{figure}{section}
\title[Derivation of the bacterial run-and-tumble kinetic model : quantitative and strong convergence results]{Derivation of the bacterial run-and-tumble kinetic model: quantitative and strong convergence results} 
\keywords{Chemotaxis, kinetic equation, run and tumble, asymptotic analysis, signaling pathways}
\subjclass[2020]{
Primary:
35B40       
Secondary:
35Q92, 92C17, 82C40,
}
\begin{document}
 
\maketitle

\centerline{\scshape Alain Blaustein\footnote{alain.blaustein@inria.fr, Inria centre at the University of Lille, 40 avenue Halley - Bât A - Park Plaza
		59650 Villeneuve d'Ascq – France.}}

\bigskip

\begin{abstract}
	During the past century, biologists and mathematicians investigated two mechanisms underlying bacteria motion: the run phase during which bacteria move in straight lines and the tumble phase in which they change their orientation. When surrounded by a chemical attractant, experiments show that bacteria increase their run time as moving up concentration gradients, leading to a biased random walk towards favorable regions. This observation raises the following question, which has drawn intense interest from both biological and mathematical communities:
	\begin{center}
			what cellular mechanisms enable bacteria to feel concentration gradients ? 
	\end{center}
	In this article, we investigate an asymptotic regime that was proposed to explain this ability thanks to internal mechanisms. More precisely, we derive the run-and-tumble kinetic equation with concentration's gradient dependent tumbling rate from a more comprehensive model, which incorporates internal cellular mechanisms. Our result improves on previous investigations, as we obtain strong convergence towards the gradient dependent kinetic model with quantitative and formally optimal convergence rates. The main ingredient consists in identifying a set of coordinates for the internal cellular dynamics in which concentration gradients arise explicitly. Then, we use relative entropy methods in order to capture quantitative measurement of the distance between the model incorporating cellular mechanisms and the one with concentration gradient dependent tumbling rate.   
\end{abstract}

\vspace{0.1cm}

\tableofcontents

\section{Introduction}
\label{sec:1}
\setcounter{equation}{0}
\setcounter{figure}{0}
\setcounter{table}{0}
Quantitative studies of chemotaxis date back to the $19^{th}$ century with the pioneering works of Engelmann and Pfeffer, followed by numerous other scientists \cite{Beyerinck1893,Sherris_Preston_Shoesmith57,Adler66}. At that time, very little was known about the underlying mechanisms driving the movement of organisms toward and away from a chemical. In 1953, C. Patlak \cite{Patlak53} established the link between macroscopic motion of sensitive organisms and biased velocity jumps at the microscopic level. He proposed an abstract framework in which he derived the Patlak-Keller-Segel model using a kinetic description of organisms. This approach was then extended
in the early  80's by W. Alt \cite{Alt80} who specifically focused on chemo-sensitive cells and simplified Patlak's arguments. He proposed a detailed run-and-tumble kinetic equation and formally derived Patlak-Keller-Segel type model in the appropriate scaling. H. Othmer \textit{et al.} \cite{Othmer_Dunbar_Alt88} simplified further the analysis in the case where cells are not affected by the presence of chemicals. They proposed the kinetic run-and-tumble model in the form that is standard today. This model prescribes the dynamics of the kinetic distribution $\bar{p}(t,\bx,\bv)$ of bacteria  at time $t\geq 0$, over the phase space $\left(\bx,\bv\right)\in\R^d\times V$, where the spatial variable $\bx$ lies in $\R^d$ whereas velocities $\bv$ lie in a smooth open subset $V$ of $\R^d$. The distribution function $\bar{p}(t,\bx,\bv)$ satisfies a Boltzmann type equation which reads
\begin{equation}
	\label{eq:p:lim}
	\ds\frac{\partial \bar{p}}{\partial t}\,+\,\bv\cdot\nabla_\bx \bar{p}
	\,=\,Q\left[\bar{p}\right]\,.
\end{equation}
On the left-hand side of \eqref{eq:p:lim}, the transport operator $\bv\cdot\nabla_\bx \bar{p}$ describes the "running" phase during which bacteria move along the straight lines generated by their velocities $\bv$. On the right-hand side of \eqref{eq:p:lim}, the operator $Q\left[\bar{p}\right]$ describes the "tumbling" phase within which bacteria modify their velocity. Numerous experiments \cite{Macnab_Koshland72,Berg_Brown72,Zigmond74,Berg75,Berg_Tedesco75,HALL_Peterson79,Block_Segall_Berg83} enabled to measure the parameters involved in the tumbling process of various types of cells, including tumbling rate and post-tumbling velocity distribution. It appears that bacteria such as \textit{E. Coli} decrease their tumbling rate proportionally to the spatio-temporal variations of the logarithmic chemo-attractant concentration \cite{Macnab_Koshland72,Berg_Brown72,Block_Segall_Berg83,Kalinin_Jiang_Tu_Wu09,Jiang_Ouyang_Tu10}, leading to a biased random walk towards more favorable regions. Thus, the effective external signal $M(t,\bx)$ may be interpreted  as the logarithm concentration $\ln{\left(S(t,\bx)\right)}$ of the chemical at time $t\geq0$ and position $\bx\in\R^d$. It influences the tumbling process through its spatio-temporal variations, or path-wise gradient, defined as
\begin{equation}\label{def:mat:der}
D_t M (t,\bx,\bv)\,=\, \partial_t M(t,\bx) + \bv\cdot \nabla_\bx M(t,\bx)\,,\quad \forall\,(t,\bx,\bv)\in\R^+\times \R^d\times V\,.
\end{equation}
The tumbling operator $Q\left[\bar{p}\right]$ then takes the following general form \cite{Alt80,Othmer_Dunbar_Alt88,Chalub_Markowich_Perthame_Schmeiser04,Hwang_Kang_Stevens05,Bournaveas_Calvez08,Bournaveas_Calvez_Gutierrez_Perthame08}:
\begin{equation}\label{def:tumbling:op:lim}
	Q\left[\bar{p}\right]\,=\,
	\int_{V}\bar{\Lambda}\left(D_t M(t,\bx,\bv'),\bv,\bv'\right)\bar{p}\left(t,\bx,\bv'\right)
	-\bar{\Lambda}\left(D_t M(t,\bx,\bv),\bv',\bv\right)\bar{p}\left(t,\bx,\bv\right)\dD \bv'
	\,,
\end{equation}
where the tumbling kernel $\bar{\Lambda}\left(D_t M,\bv,\bv'\right)$ describes the probability for a bacteria with velocity $\bv'$ to transfer to velocity $\bv$ for a given  effective signal variation $D_t M$.\\

This phenomenological model raises the natural question of understanding the internal mechanisms which enable cells to feel and respond to external spatio-temporal variations, despite the gap that separates their body size and the characteristic scale at which these variations occur.\\

Attempts to answer such a question date back to the 50's for the reaction to light stimulations of Phycomyces \cite{Delbruck_Reichardt56}. In the case of chemotaxis, R. Macnab and D. Koshland \cite{Macnab_Koshland72} experimentally confirmed that the driving cellular mechanism for sensing gradients is memory. This observation was corroborated and completed by a phenomenological model in  \cite{Block_Segall_Berg83}. In this article H. Berg \textit{et al.} explained cell memory through a simple adaptation process. Adaptation arise from the interplay between fast internal variables which rapidely adapt, and therefore may be identified to $M(t,\bx)$, and a slower internal variable $m(t)\in\R$ which relaxes to $M(t,\bx)$ after longer periods of time. In their model, the internal variable $m(t)$ of each cell satisfies the following equation
\begin{equation}\label{intern:simplified}
\frac{\dD}{\dD t} m(t) = -\frac{1}{\eps}\left(m(t)-M(t,\bx)\right)\,,
\end{equation}
where the parameter $\eps>0$ represents the time delay of adaptation. The response of the cell is then proportional to the derivative of $m(t)$. The previous model is of course an approximation of the full dynamics and by now, extensive experimental investigations led to a fine understanding of the signaling pathways from chemoreceptors to flagellar-motor: identification of proteins involved in the process \cite{Borkovich_Kaplan_Hess_Simon89,Falke_Bass_Butler_Chervitz_Danielson97,Scharf_Fahrner_Turner_Berg98,Alon_Camarena_Surette_Aguera98}, ultra-sensitivity mechanisms \cite{Cluzel_Surette_Leibler00,Sourjik_Berg02,Sourjik04} and robustness through adaptation \cite{Barkai_Leibler97,Yi_Huang_Simon_Doyle00}. Comprehensive models for  internal cell dynamics have been subsequently proposed and experimentally tested \cite{Kalinin_Jiang_Tu_Wu09,Jiang_Ouyang_Tu10}.  We also know that the slower variable $m(t)$ accounts for the methylation level of chemoreceptors whereas faster dynamics rely on the phosporylation of the protein chain involved in the signaling pathways. A more complete model would therefore require $m(t)>0$, as proposed in \cite{Perthame_Tang_vauchelet16} for example. In this article, we stick to \eqref{intern:simplified} for simplicity and because it is sufficient for our purpose. Incorporating these internal dynamics into the kinetic description of the run-and-tumble process leads to the following equation
\begin{equation}
\label{eq:p}
\ds\frac{\partial p^\eps}{\partial t}\,+\,\bv\cdot\nabla_\bx p^\eps
\,-\, \frac{1}{\eps}\,\partial_m \left[(m-M)p^\eps \right]\,-\,\eps\,\partial^2_m p^\eps\,=\,Q^\eps\left[p^\eps\right]\,,
\end{equation}
where $p^\eps(t,\bx,\bv,m)$  is the probability distribution of bacteria at time $t\geq 0$, position $\bx\in \R^d$, with velocity $\bv \in V$ and methylation level $m\in \R$. In \eqref{eq:p}, the diffusion term $\partial^2_m p^\eps$ takes into account random fluctuation in the signaling process \cite{Korobkova_Emonet_Vilar_Jose_Shimizu_Cluzel04, Clausznitzer_Endres11}. The parameter $\eps>0$ quantifies the ratio between adaptation and observation time scales. According to our previous observations and unlike $Q\left[\bar{p}\right]$, the tumbling operator $Q^\eps$ does not depend on $D_t M$, but on the time derivative of the methylation level $m(t)$ instead, that is
\begin{equation}\label{def:tumbling:op}
Q^\eps\left[p^\eps\right]\,=\,
\int_{V}
\Lambda\left(\frac{m-M}{\eps},\bv,\bv'\right)p^\eps\left(t,\bx,\bv',m\right)
-\Lambda\left(\frac{m-M}{\eps},\bv',\bv\right)p^\eps\left(t,\bx,\bv,m\right)\dD \bv'
\,.
\end{equation}
In this article, we rigorously and quantitatively prove that, over large observation periods compared to the adaptation  time scale, that is, in the $\eps\rightarrow 0$ regime, solutions to \eqref{eq:p}-\eqref{def:tumbling:op} converge towards solutions to \eqref{eq:p:lim}-\eqref{def:tumbling:op:lim}. This result constitutes a mathematical illustration of how adaptation may enable bacteria to feel gradients and thus respond to external signal.\\
Before proceeding to the core of this article, we give an overview of related mathematical investigations. The kinetic run-and-tumble model \eqref{eq:p:lim}-\eqref{def:tumbling:op:lim} has drawn lots of attention from the mathematical community as it captures biologically relevant phenomena. One of the most striking being the existence of chemotactic waves, which have been studied by J. Saragosti \textit{et al.} in  \cite{Saragosti_Calvez_Bournaveas_Buguin_Silberzan_Perthame10}, on a modified Keller-Segel type model obtained in the macroscopic limit of \eqref{eq:p:lim}-\eqref{def:tumbling:op:lim}. The existence of chemotactic waves was then extended to the full kinetic model by V. Calvez in \cite{calvez2016}. In the kinetic case, V. Calvez \textit{et al.} \cite{Calvez_Gosse_Twarogowska17} numerically observed that several waves with different velocities may coexist, highlighting how intricate the situation may be. In addition to traveling waves, N. Bournaveas and V. Calvez \cite{Bournaveas_Calvez09} proved that, for specific tumbling kernels, spherically symmetric solutions to \eqref{eq:p:lim}-\eqref{def:tumbling:op:lim} feature finite time blow-up above a certain mass threshold, when \eqref{eq:p:lim}-\eqref{def:tumbling:op:lim} is coupled with an elliptic equation on $M(t,\bx)$. In contrast, this coupled nonlinear system has been proved to be well-posed globally in time, regardless of the initial mass, under various regularity assumptions on the tumbling kernel \cite{Chalub_Markowich_Perthame_Schmeiser04,Hwang_Kang_Stevens05,Bournaveas_Calvez08,Bournaveas_Calvez_Gutierrez_Perthame08}. The macroscopic limits of the kinetic model have been extensively investigated, both in the parabolic scaling  \cite{Alt80,Hillen_Othmer00,Chalub_Markowich_Perthame_Schmeiser04}, where we recover Patlak-Keller-Segel type equations, and in the hyperbolic scaling, where we obtain a system of balance law \cite{Perthame04,Filbet_Laurencot_Perthame05,James_Vauchelet13}. 

Let us now review the literature dedicated to the model with internal variables \eqref{eq:p}-\eqref{def:tumbling:op}. First, R. Erban and H. Othmer integrated more detailed internal dynamics to the kinetic model, in the case where the tumbling kernel is a linear function of the internal state, in dimension one \cite{Erban_Othmer04} and then in higher dimensions \cite{Erban_Othmer05}. The authors also took into account external forces \cite{Erban_Othmer07}. This approach was then extended by C. Xue and H. Othmer in order to include nonlinear dependence with respect to internal variables and time dependent external signal \cite{Xue_Othmer09}. In these articles, the authors explain how to obtain macroscopic equations in parabolic and hyperbolic scaling. The longtime and macroscopic regimes of the model were also rigorously analyzed by Y. Dolak and C. Schmeiser in \cite{Dolak_Schmeiser05}, for smooths turning kernels depending only on $D_t M$. Similarly, macroscopic Keller-Segel equations were derived from a one dimensional model by G. Si \textit{et al.} in \cite{Si_Tang_Yang14}. Finally, the mechanics that relate the internal behavior and the resulting macroscopic dynamics were further analyzed by C. Xue in \cite{Xue15}. At this stage in the literature, there was no investigation regarding the relation between the kinetic model with internal variables \eqref{eq:p}-\eqref{def:tumbling:op} and its run-and-tumble counterpart \eqref{eq:p:lim}-\eqref{def:tumbling:op:lim}. B. Perthame \textit{et al.} filled this gap in \cite{Perthame_Tang_vauchelet16} and recovered \eqref{eq:p:lim}-\eqref{def:tumbling:op:lim} from \eqref{eq:p}-\eqref{def:tumbling:op} in the limit $\eps\rightarrow0$. The present article completes the former, as we prove strong convergence with explicit rates. We also relax the assumption on the tumbling kernel $\Lambda$, merely requiring integrability and moment assumptions.\\

Let us now proceed to the core of this article. First,  we provide formal insights on the asymptotic behavior of $p^\eps$ in the regime $\eps\rightarrow 0$. This will supply the reader with an overview of the upcoming analysis. We consider non-negative and normalized solutions to \eqref{eq:p}-\eqref{def:tumbling:op}, that is $p^\eps_0:=p^\eps(t=0)\in L^1\left(\R^d\times V \times \R\right)$ for all $\eps>0$ with
\[
p^\eps_0\,\geq\,0\;\;\;\text{a.e. and}\quad \int_{\R^d\times V\times \R}  p^\eps_0\left(\bx,\bv,m\right)\,\dD \bx \dD \bv \dD m\,=\,1\,.
\]
Since equation \eqref{eq:p} is conservative, the preceding property is then satisfied for all $t\geq0$. Considering the leading order in $\eps$ in \eqref{eq:p}, we find 
\[
(m-M)\,p^\eps(t,\bx,\bv,m)
\underset{\eps\rightarrow 0}{\sim}0\,,
\]
which, due to our normalization hypothesis, means that $p^\eps$ concentrates into a Dirac distribution with respect to $m$ as $\eps$ vanishes.
To desingularize the limit, we rescale $p^\eps$ as follows
\begin{equation}\label{def:q:eps}
p^\eps(t,\bx,\bv,m)
\,=\,\frac{1}{\eps}\,q^\eps\left(t,\bx,\bv,\frac{1}{\eps}\left(m-N(t,\bx,\bv)\right)\right),
\end{equation}
for some function $N$ to be chosen later. The equation on the re-scaled distribution $q^\eps$ is derived performing the following change of variable in \eqref{eq:p}
\begin{equation}\label{change:var}
	y\,\longleftarrow \frac{1}{\eps}\left(m-N(t,\bx,\bv)\right).
\end{equation}
We find that $q^\eps$ solves 
\begin{equation*}
	\ds\frac{\partial q^\eps}{\partial t}\,+\,\bv\cdot\nabla_\bx q^\eps
	\,-\, \frac{1}{\eps}\,\partial_y \left[\left(y+D_t N +\frac{N-M}{\eps}\right)q^\eps+\partial_y q^\eps \right]\,=\,\Tilde{Q}^\eps\left[q^\eps\right]\,,
\end{equation*}
where $D_t$ is defined in \eqref{def:mat:der}.
To desingularize the limit as $\eps\rightarrow 0$, we choose $N$ as the solution to 
\begin{equation}\label{eq:N}
	\left\{
	\begin{array}{l}
		\displaystyle D_t N = \frac{1}{\eps}\left(M-N\right)
		,\\[1.5em]
		\displaystyle
		N(0,\bx,\bv)\,=\, M(0,\bx)
		\,,
	\end{array}
	\right.
\end{equation}
and therefore deduce that $q^\eps$ solves
\begin{equation}
	\label{eq:q:eps}
	\ds\frac{\partial q^\eps}{\partial t}\,+\,\bv\cdot\nabla_\bx q^\eps
	\,-\, \frac{1}{\eps}\,\partial_y \left[\,y \,q^\eps+\partial_y q^\eps \right]\,=\,\Tilde{Q}^\eps\left[q^\eps\right]\,,
\end{equation}
where the re-scaled tumbling operator $\Tilde{Q}^\eps$ is given by
\begin{align*}
	\Tilde{Q}^\eps\left[q^\eps\right]\,=\,
	\int_{V}
	\Lambda\left(y'-D_t N(t,\bx,\bv'),\bv,\bv'\right)&q^\eps\left(t,\bx,\bv',y'\right)\\[0.8em]
	&-\Lambda\left(y-D_t N(t,\bx,\bv),\bv',\bv\right)q^\eps\left(t,\bx,\bv,y\right)\dD \bv'
	\,,
\end{align*}
and where we used the shorthand notation 
\[y' = y + \left(N(t,\bx,\bv)-N(t,\bx,\bv')\right)/\eps.\]
Thanks to the change of variable \eqref{change:var}, the limit of $q^\eps$ is no longer singular. Indeed, the leading term in \eqref{eq:q:eps} as $\eps\rightarrow 0$ is given by the Fokker-Planck operator $\partial_y \left[\,y \,q^\eps+\partial_y q^\eps \right]$ whose kernel is generated by the Maxwellian distribution defined as
\begin{equation}\label{Maxwellian}
	\cM(y)\,=\,\frac{1}{\sqrt{2\pi}}\exp{\left(-\frac{1}{2}\left|y\right|^2\right)}\,,\quad \forall\,y\in\R\,.
\end{equation}
Hence, in the regime of fast adaptation, we expect 
\[
q^\eps(t,\bx,\bv,y)\underset{\eps\rightarrow 0}{\sim}\bar{q}^\eps(t,\bx,\bv)\cM(y)\,,
\]
where  $\bar{q}^\eps$ denotes the marginal of $q^\eps$ with respect to $y$, that is
\begin{equation}\label{def:q:eps:bar}
	\bar{q}^\eps(t,\bx,\bv)\,=\, 
	\int_{\R} q^\eps(t,\bx,\bv,y)\dD y\,.
\end{equation}
To determine the limiting dynamics of $\bar{q}^\eps$, we formally replace $q^\eps$ with $\bar{q}^\eps\cM$ in \eqref{eq:q:eps} and close the equation by integrating with respect to $y$. We find that $\bar{q}^\eps$ converges to a solution of \eqref{eq:p:lim}-\eqref{def:tumbling:op:lim} with tumbling kernel 
\begin{equation}\label{def:Lamda:bar}
\bar{\Lambda}(m,\bv,\bv')\,=\,
	\int_{\R}
\Lambda\left(y-m,\bv,\bv'\right)\cM(y)\dD y\,.
\end{equation}
We emphasize that the previous identity explicitly relates the tumbling kernel in the model with internal dynamics \eqref{eq:p}-\eqref{def:tumbling:op} and the resulting kernel in the run-and-tumble model \eqref{eq:p:lim}-\eqref{def:tumbling:op:lim}. These formal considerations lead to \[q^\eps(t,\bx,\bv,y)\;\underset{\eps\rightarrow0}{\sim}\;\bar{p}(t,\bx,\bv)\cM(y)\,,\]
and then, we invert the change of variable \eqref{change:var} to deduce the following formal convergence for $p^\eps$
\[
p^\eps(t,\bx,\bv,m)\;\underset{\eps\rightarrow0}{\sim}\;\bar{p}\,\mathcal{M}_{\eps,N}(t,\bx,\bv,m)\,,
\]
where $\bar{p}$ solves \eqref{eq:p:lim}-\eqref{def:tumbling:op:lim} with tumbling kernel \eqref{def:Lamda:bar} and where
$\mathcal{M}_{\eps,N}$ is defined for all $(t,\bx,\bv,m)\in\R^+\times \R^d\times V\times \R$ as
\begin{equation}\label{def:M:eps}
	\mathcal{M}_{\eps,N}(t,\bx,\bv,m)\,=\,\frac{1}{\eps}\, \mathcal{M}\left(\frac{m-N(t,\bx,\bv)}{\eps}\right)\,,
\end{equation}
with $N$ given in \eqref{eq:N}. In Theorem \ref{TH:MAIN} below, we rigorously prove this result. The asymptotic analysis of \eqref{eq:p}-\eqref{def:tumbling:op} was first carried in \cite{Perthame_Tang_vauchelet16}, in which B. Perthame \textit{et al.} obtained the weak convergence of $p^\eps$ in probability spaces. In this article, we complete their result by deriving strong convergence in $L^1\left(\R^d\times V\times \R\right)$ and by providing explicit convergence rates.
\section{Assumptions and main result}
\label{sec:result}

Before stating our main result, we specify our setting and the assumptions on the data of the problem. 

Let us first point out that the existence theory has been extensively investigated in the case where \eqref{eq:p:lim}-\eqref{def:tumbling:op:lim} is coupled with an equation on the effective signal $M(t,\bx)$ \cite{Chalub_Markowich_Perthame_Schmeiser04,Hwang_Kang_Stevens05,Bournaveas_Calvez08,Bournaveas_Calvez_Gutierrez_Perthame08}. In our case, the external signal is given, which makes equations \eqref{eq:p:lim}-\eqref{def:tumbling:op:lim}  and \eqref{eq:p}-\eqref{def:tumbling:op} linear. Therefore, existence and uniqueness of solutions is well established. Furthermore, $L^\infty$ \textit{a priori} bounds have been derived in \cite{Perthame_Tang_vauchelet16} for  \eqref{eq:p:lim}-\eqref{def:tumbling:op:lim}. In what follows, we will not discuss well-posedeness issues and consider strong solutions to both \eqref{eq:p:lim}-\eqref{def:tumbling:op:lim}  and \eqref{eq:p}-\eqref{def:tumbling:op} in order to focus on our main concern, that is, the regime $\eps\rightarrow 0$.

We now specify our assumptions on the data of the problem. First, we suppose that the effective signal $M$ is a given function verifying
\begin{equation}\label{hyp:M}
	\left(M:(t,\bx)\longmapsto M(t,\bx)\right)\in W^{2,\infty}\left(\R^+\times\R^d\right),
\end{equation}
and we use the notation $M_0:=M(t=0)$. Second, we suppose that the tumbling kernel $\Lambda$ takes positive values and satisfies the following general assumptions 
\begin{subequations}
	\begin{numcases}{}
		\label{hyp:Lambda+}
		\ds
		\sup_{(\bv,m)\in V\times \R}\left(
		\int_{V}
		\left(1+|\bv'|^2\right)
		\Lambda\left(m,\bv',\bv\right) \dD \bv'\right)<\,+\infty\,,\\[0,9em]
		\label{hyp:Lambda-}
		\ds
		\sup_{(\bv,m)\in V\times \R}\left(
		\int_{V}
		\Lambda\left(m,\bv,\bv'\right) \dD \bv'\right)<\,+\infty\,.
	\end{numcases}
\end{subequations}
Regarding the sequence of initial conditions $\left(p^\eps_0\right)_{\eps>0}$, we assume that it satisfies the following moment and entropy conditions
\begin{equation}\label{hyp1:p:eps}
	\sup_{\eps>0}\left(
	\int_{\R^d\times V\times \R}
	\left( \left|\log{\left(p^\eps_0(\bx,\bv,m)\right)}\right|\,+\,|\bx|+
	|\bv|^2+|m|^2\right)p^\eps_0(\bx,\bv,m)\,
	\dD \bx\dD \bv \dD m\right)<\,+\infty\,.
\end{equation}
In addition, we suppose the initial data to be well prepared in the following sense
\begin{equation}\label{hyp2:p:eps}
	\sup_{\eps>0}\left(\eps^{-2}
	\int_{\R^d\times V\times \R}
	|m-M_0(\bx)|^2\,p^\eps_0(\bx,\bv,m)\,
	\dD \bx\dD \bv \dD m\right)<\,+\infty\,.
\end{equation}
This assumption will be thoroughly discussed below our main result Theorem \ref{TH:MAIN}. To conclude with the assumption on $\left(p^\eps_0\right)_{\eps>0}$, we assume that the marginals with respect to $m$
converge to the initial data $\bar{p}_0$ of the solution $\bar{p}$ to \eqref{eq:p:lim}-\eqref{def:tumbling:op:lim}, that is
\begin{equation}\label{hyp0:p}
\sup_{\eps>0}\left(
\eps^{-2}
\left\|\bar{p}_0-\bar{p}^\eps_0\right\|_{L^1\left(\R^d\times V\right)}\right)\,<\,+\infty\,\quad \text{where}\quad
	\bar{p}^\eps_0(\bx,\bv)\,=\, 
\int_{\R} p^\eps_0(\bx,\bv,m)\dD m\,.
\end{equation}
Regarding the initial data of the limiting model, we suppose that it satisfies
\begin{equation}\label{hyp1:p}
	\int_{\R^d\times V}
	|\bv|^2\,\bar{p}_0(\bx,\bv)\,
	\dD \bx\dD \bv<\,+\infty\,.
\end{equation}
Our main result establishes a rigorous and quantitative link between equations \eqref{eq:p:lim}-\eqref{def:tumbling:op:lim} and \eqref{eq:p}-\eqref{def:tumbling:op}, providing a mathematical description of the mechanisms allowing bacteria to feel and respond to environmental gradients. More precisely, we prove that the marginal $\bar{p}^\eps$ of $p^\eps$  converges towards the solution $\bar{p}$ to \eqref{eq:p:lim}-\eqref{def:tumbling:op:lim} in $L^1\left(\R^d\times V\times \R\right)$ with pointwise and polynomial control in time. Furthermore, we characterize the blow-up profile in $m$ by proving that the methylation level distribution converges in $L^1$ towards a Maxwellian distribution with vanishing variance and whose center is explicitly determined. Our estimates are quantitative since we obtain explicit convergence rates with respect to the scaling parameter $\eps$. To conclude, the time dependence is also explicit: we achieve global in time estimates with polynomial control, avoiding exponential blow-up. The first part of our result features linear growth, ensuring that our estimate stays valid over large time periods
\begin{theorem}\label{TH:MAIN} 
	Under assumptions \eqref{hyp:M} on $M$, \eqref{hyp:Lambda+}-\eqref{hyp:Lambda-} on $\Lambda$,  \eqref{hyp1:p:eps}-\eqref{hyp0:p} on $\left(p^\eps_0\right)_{\eps>0}$ and \eqref{hyp1:p} on $\bar{p}_0$, consider a sequence of strong solutions $\left(p^\eps\right)_{\eps>0}$ to \eqref{eq:p}-\eqref{def:tumbling:op} with initial data $\left(p^\eps_0\right)_{\eps>0}$ and a strong solution $\bar{p}$ to \eqref{eq:p:lim}-\eqref{def:tumbling:op:lim} with initial data $\bar{p}_0$ and tumbling kernel $\bar{\Lambda}$ given in \eqref{def:Lamda:bar}. \\
	There exist two positive constants $\eps_0$ and $C$ such that for all $\eps$ between $0$ and $\eps_0$ and all $t\geq0$, it holds
	\[
	\left\|\,
	p^\eps\,-\,\ols{p}^\eps\,
	\mathcal{M}_{\eps,N}\,
	\right\|_{L^2\left([\,0\,,\,t\,]\,,\,L^1
		\left(
		\R^d\times V\times \R
		\right)\right)
	}\,\leq\,
	C\,\sqrt{\eps}\,\left(t \,+\, 1 \,+\, \left|\log{\left(\eps\right)}\right|^{\frac{1}{2}}\right)\,,
	\]
	and
	\[
	\left\|\bar{p}^\eps(t)-\bar{p}(t)\right\|_{L^1
		\left(
		\R^d\times V\times \R
		\right)}
	\,\leq\,
	C\left(
	\sqrt{\eps\, t}\,\left(t \,+\, 1\,+\, \left|\log{\left(\eps\right)}\right|^{\frac{1}{2}}\right)
	+
	\eps^2\right)
	\,,
	\]
where $\mathcal{M}_{\eps,N}$ is defined in \eqref{def:M:eps} and where the solution $N$ to \eqref{eq:N} satisfies 
	\begin{equation*}
		\left|D_t M(t,\bx)
		-D_t N(t,\bx,\bv)\right|
		\,\leq\,C\left(\eps+e^{-\frac{t}{\eps}}\right)\left(1+|\bv|^2\right) \,,\quad \forall(\bx,\bv)\in \R^d\times V\,,
	\end{equation*}
	with $D_t$ is defined in \eqref{def:mat:der}. In these estimates, the constant $C>0$ can be computed explicitly and only depends on the constants in assumptions \eqref{hyp:M}-\eqref{hyp1:p}.
\end{theorem}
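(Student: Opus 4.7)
My plan is to realize the formal asymptotics described just before the theorem in a rigorous and quantitative way, working entirely on the rescaled equation \eqref{eq:q:eps} for $q^\eps$ and combining moment bounds with a relative entropy dissipation driven by the log-Sobolev inequality for the Maxwellian $\cM$.

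\emph{Preliminary: control of $N$.} Setting $R := D_t M - D_t N$ and $h := N - M$, equation \eqref{eq:N} together with \eqref{def:mat:der} gives the linear ODEs $D_t R + \eps^{-1} R = D_t^2 M$ with $R(0,\bx,\bv) = D_t M(0,\bx,\bv)$, and $D_t h + \eps^{-1} h = -D_t M$ with $h(0,\bx,\bv) = 0$, along $\dot{\bx}=\bv$. Duhamel's formula and \eqref{hyp:M} immediately produce the third bound of the theorem, $|R|\le C(\eps+e^{-t/\eps})(1+|\bv|^2)$ (the quadratic weight coming from $D_t^2 M = \partial_t^2 M + 2\bv\cdot\nabla_\bx\partial_t M + \bv\otimes\bv:\nabla_\bx^2 M$), together with the byproduct $|N(t,\bx,\bv) - M(t,\bx)| \le C\eps(1+|\bv|)$. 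The latter is crucial since it yields $|(N(t,\bx,\bv) - N(t,\bx,\bv'))/\eps| \le C(1+|\bv|+|\bv'|)$ uniformly in $t$, keeping the shift appearing in the tumbling operator $\tilde Q^\eps$ of \eqref{eq:q:eps} bounded on the bulk of velocities.

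\emph{Main ingredient: relative entropy.} Using \eqref{hyp1:p:eps}--\eqref{hyp2:p:eps} (the well-prepared assumption \eqref{hyp2:p:eps} simply translates, via \eqref{change:var} and $N(0,\cdot)=M_0$, into the moment bound $\iiint y^2 q^\eps_0\,\dD\bx\,\dD\bv\,\dD y \le C$) and \eqref{hyp:Lambda+}--\eqref{hyp:Lambda-}, I would propagate uniform-in-$\eps$ moment and entropy bounds on $q^\eps$ with at most polynomial growth in $t$. I would then consider
\[
H^\eps(t) \;:=\; \iiint_{\R^d\times V\times\R} q^\eps \log\!\frac{q^\eps}{\bar q^\eps\,\cM}\,\dD\bx\,\dD\bv\,\dD y \;\ge\; 0.
\]
Differentiating along \eqref{eq:q:eps} and integrating by parts, the singular Fokker--Planck term $\eps^{-1}\partial_y[yq^\eps+\partial_y q^\eps]$ produces a Fisher-information dissipation which, by the log-Sobolev inequality for $\cM$ applied fiberwise in $y$, is bounded above by $-c\eps^{-1} H^\eps$. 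The transport and tumbling contributions form a source $S^\eps(t)$ controlled by the preceding bounds and \eqref{hyp:Lambda+}--\eqref{hyp:Lambda-}, leading to $\frac{d}{dt} H^\eps + c\eps^{-1} H^\eps \le S^\eps(t)$. A Gronwall argument then yields $\int_0^t H^\eps(s)\,\dD s \le C\eps(t^2 + 1 + |\log\eps|)$, with the $|\log\eps|$ reflecting a careful treatment of the initial layer $t\lesssim\eps|\log\eps|$; Csisz\'ar--Kullback and inversion of the rescaling \eqref{def:q:eps} then deliver the first displayed estimate of the theorem.

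\emph{Second estimate and main obstacle.} Integrating \eqref{eq:q:eps} in $y$ gives a transport equation for $\bar q^\eps$ whose right-hand side is $\int_\R \tilde Q^\eps[q^\eps]\,\dD y$. Decomposing $q^\eps = \bar q^\eps\cM + (q^\eps - \bar q^\eps\cM)$ inside that integral identifies the main part with $Q[\bar q^\eps]$ for the kernel $\bar\Lambda$ of \eqref{def:Lamda:bar}, up to an $O(\eps)$ error traced back to $D_t N - D_t M$ via Step 1; comparing with the equation on $\bar p$ via the $L^1$ contraction inherent to \eqref{eq:p:lim}--\eqref{def:tumbling:op:lim}, converting the $L^2_tL^1$ bound of Step 3 into an $L^1_tL^1$ source control by Cauchy--Schwarz, and adding the initial $O(\eps^2)$ gap from \eqref{hyp0:p}, produces the second displayed estimate. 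The core obstacle pervading both Steps 3 and 4 is precisely the shift $(N(t,\bx,\bv)-N(t,\bx,\bv'))/\eps$ in $\tilde Q^\eps$: a priori it is of order $\eps^{-1}$ and only the preliminary control of $N$ combined with the moment bounds on $q^\eps$ render it uniformly $O(1+|\bv|+|\bv'|)$, which is exactly what is needed to close both the entropy dissipation and the kernel comparison. A secondary technical difficulty is to obtain polynomial rather than exponential-in-$t$ constants, which is why Step 4 exploits the $L^1$ contraction of the limiting tumbling operator rather than a direct Gronwall estimate on $\|\bar q^\eps - \bar p\|_{L^1}$.
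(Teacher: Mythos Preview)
Your proposal is correct and follows essentially the same route as the paper: Duhamel control of $N$ (yielding both the third estimate and the crucial $O(|\bv-\bv'|)$ bound on the shift in $\tilde Q^\eps$), uniform moment propagation, a relative-entropy/Fisher-information dissipation closed via the Gaussian log-Sobolev and Csisz\'ar--Kullback inequalities for the first estimate, and the signed $L^1$ contraction of the limiting tumbling operator (rather than Gronwall) for the second. Two minor deviations worth noting: the paper works with the simpler functional $E[q^\eps]=\int q^\eps\log(q^\eps/\cM)$, whose reference is time-independent and hence avoids differentiating $\bar q^\eps$, and the $|\log\eps|$ does not come from an initial layer but directly from $E[q^\eps_0]$ once one undoes the change of variables \eqref{change:var} (the Jacobian contributes a $\log\eps$ term).
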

The main difficulty to obtain Theorem \ref{TH:MAIN} consists in proving that the rescaled distribution $q^\eps$ introduced in \eqref{eq:q:eps} converges towards $\bar{p}\,\cM$. This proof is divided into two main steps, corresponding to the formal computations developed in the introduction. First, we prove that the entropy dissipation along the flow of \eqref{eq:q:eps} controls the $L^1$ convergence of $q^\eps$ towards its Maxwellian local equilibrium $\bar{q}^\eps\,\cM$. Then, we deduce from the first step the convergence of $\bar{q}^\eps$ towards the solution $\bar{p}$ to \eqref{eq:p:lim}-\eqref{def:tumbling:op:lim}. This step relies on a fine decomposition of the error between $\bar{q}^\eps$ and $\bar{p}$. More precisely, we take advantage of the dissipative structure of equation \eqref{eq:p:lim} by identifying signed contributions in the time fluctuations of the $L^1$ distance that separates $\bar{q}^\eps$ and $\bar{p}$.\\

Before proceeding to the proof, let us comment on Theorem \ref{TH:MAIN}. Its main feature lies in the explicit convergence rates with respect to the adaptation/observation ratio $\eps$, completed by the strong convergence in $L^1$. On the biological point of view, this may give insights regarding the range of validity for the model \eqref{eq:p:lim}-\eqref{def:tumbling:op:lim}. Furthermore, pointwise convergence in time  is achieved for $\bar{p}^\eps$ and our estimates are global in time with at most polynomial growth. This also provides insights regarding the time scales over which the internal behavior of bacteria may have measurable contributions that the kinetic run-and-tumble model \eqref{eq:p:lim}-\eqref{def:tumbling:op:lim} cannot account for. We also emphasize that very mild assumptions are taken on the tumbling kernel $\Lambda$ compared to what is usually done in the literature. To conclude, we point out that the constants in our estimates are explicitly computable  in terms of the bounds assumed from assumption \eqref{hyp:M} through \eqref{hyp1:p}. In particular, we do not require regularity on the solution $p^\eps$ to carry out our proof.\\
The main restriction of Theorem \ref{TH:MAIN} lies in the well-preparedness assumption \eqref{hyp2:p:eps}. It may be removed at the cost of additional technicalities. To do so, the main ingredient lies in considering the following time dependent scaling instead of \eqref{change:var} 
\[
	y\,\longleftarrow \frac{1}{\theta^\eps(t)}\left(m-N(t,\bx,\bv)\right),
\]
where $\theta^\eps$ is explicitly given by $\theta^\eps(t)  = \eps+ (1-\eps)e^{-2t/\eps}$. The subsequent equation on $q^\eps$ remains unchanged except for we substitute $\eps$ by $\theta^\eps(t)$. We choose not to follow this path here but provide additional details in the conclusion at the end of this article.
\section{A priori estimates}
\label{sec:a:priori:estimate}
In this section, we derive uniform in $\eps$ moment estimates for the solution $q^\eps$ to \eqref{eq:q:eps} and prove that the  solution $N$ to \eqref{eq:N} converges to the effective signal $M$.\\

The following Lemma ensures that the solution $N$ to \eqref{eq:N} is globally Lipschitz with respect to $\bv$, uniformly with respect to $\eps$ and also that $N$ converges to $M$ as $\eps$ vanishes. The proof relies on an explicit formula for $N$ in terms of the given external signal $M$.
\begin{lemma}\label{estimate:N} Under assumption \eqref{hyp:M}, the solution $N$ to \eqref{eq:N} satisfies 
	\begin{equation}\label{estimate:N:1}
		\left|N(t,\bx,\bv)-N(t,\bx,\bv')\right|
		\leq \left\|\nabla_\bx M\right\|_{L^\infty}\eps \left|\bv-\bv'\right| ,
	\end{equation}
	and
	\begin{equation}\label{estimate:N:2}
	\left|D_t M(t,\bx)
	-D_t N(t,\bx,\bv)\right|
	\,\leq\,4\left\| M\right\|_{W^{2,\infty}}\left(\eps+e^{-\frac{t}{\eps}}\right)\left(1+|\bv|^2\right) \,,
	\end{equation}
	for all $(t,\bx,\bv,\bv')\in \R^+\times\R^d\times V^2$, with $D_t$ is defined in \eqref{def:mat:der}.
\end{lemma}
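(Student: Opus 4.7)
The plan is to integrate the transport ODE \eqref{eq:N} explicitly along the characteristics of $D_t$. Fix $(t, \bx, \bv)$ and set $\widetilde{N}(s) := N(s, \bx - (t-s)\bv, \bv)$ and $\widetilde{M}(s) := M(s, \bx - (t-s)\bv)$ for $s \in [0, t]$. By the chain rule and \eqref{eq:N}, $\widetilde{N}'(s) + \eps^{-1}\widetilde{N}(s) = \eps^{-1}\widetilde{M}(s)$ with $\widetilde{N}(0) = \widetilde{M}(0)$. The integrating factor $e^{s/\eps}$ yields the Duhamel representation
\begin{equation*}
N(t, \bx, \bv) \,=\, e^{-t/\eps}\, M(0, \bx - t\bv) \,+\, \int_0^t \frac{1}{\eps}\, e^{-(t-\sigma)/\eps}\, M(\sigma, \bx - (t-\sigma)\bv)\,\dD \sigma\,.
\end{equation*}

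The first estimate \eqref{estimate:N:1} follows by subtracting this representation at $\bv$ and $\bv'$ (with the same $\bx$) and bounding each $M$-difference by $(t-\sigma)\,\|\nabla_\bx M\|_{L^\infty}\,|\bv-\bv'|$. The resulting weight $t\,e^{-t/\eps} + \int_0^t (t-\sigma)\,\eps^{-1}\,e^{-(t-\sigma)/\eps}\,\dD \sigma$ reduces, via the substitution $u = (t-\sigma)/\eps$ and the elementary identity $\int_0^{a} u\, e^{-u}\,\dD u = 1 - (1+a)e^{-a}$, to exactly $\eps(1 - e^{-t/\eps}) \leq \eps$, which is precisely the announced bound.

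For the second estimate \eqref{estimate:N:2}, set $R := M - N$, so that $D_t R + \eps^{-1} R = D_t M$ with $R(0) = 0$. The key observation is that along the characteristic, the function $\psi(\sigma) := D_t M(\sigma, \bx - (t-\sigma)\bv, \bv)$ equals $\widetilde{M}'(\sigma)$ exactly. The Duhamel formula for $R$ reads $\widetilde{R}(t) = \int_0^t e^{-(t-\sigma)/\eps}\,\psi(\sigma)\,\dD \sigma$, and the trick is to integrate by parts \emph{the other way}: rewrite $e^{-(t-\sigma)/\eps} = \eps\,\partial_\sigma e^{-(t-\sigma)/\eps}$, integrate the exponential and differentiate $\psi$, thereby producing an explicit $\eps$ prefactor. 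Combined with $D_t N = \eps^{-1} R$ from \eqref{eq:N}, this yields the identity
\begin{equation*}
(D_t M - D_t N)(t, \bx, \bv) \,=\, e^{-t/\eps}\, \psi(0) \,+\, \int_0^t e^{-(t-\sigma)/\eps}\, \psi'(\sigma)\,\dD \sigma\,.
\end{equation*}
It only remains to bound the right-hand side pointwise: $|\psi(0)| \leq \|M\|_{W^{1,\infty}}(1+|\bv|)$, while an explicit chain-rule expansion gives $\psi'(\sigma) = \partial_t^2 M + 2\bv \cdot \nabla_\bx \partial_t M + \bv^T \nabla_\bx^2 M\,\bv$, dominated by $\|M\|_{W^{2,\infty}}(1+|\bv|)^2$. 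Using $\int_0^t e^{-(t-\sigma)/\eps}\,\dD \sigma \leq \eps$ together with the elementary bounds $1 + |\bv| \leq 2(1 + |\bv|^2)$ and $(1 + |\bv|)^2 \leq 2(1 + |\bv|^2)$ then yields \eqref{estimate:N:2}. The only non-routine ingredient is the integration-by-parts trick that trades a derivative of $M$ for an explicit $\eps$ factor; everything else is a direct characteristic ODE computation.
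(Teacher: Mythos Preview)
Your proof is correct and follows essentially the same approach as the paper: the explicit Duhamel representation along characteristics, the Lipschitz bound on $M$ for \eqref{estimate:N:1}, and an integration-by-parts identity expressing $D_tM-D_tN$ in terms of $D_tM$ at time $0$ and $D_t^2M$ for \eqref{estimate:N:2}. The only cosmetic difference is that you organize the second estimate around $R=M-N$ and a single integration by parts of its Duhamel formula, whereas the paper integrates the formula for $N$ by parts twice before invoking $D_tN=(M-N)/\eps$; the resulting identity and final bounds are the same.
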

\begin{proof} We consider $(t,\bx,\bv,\bv')\in \R^+\times\R^d\times V^2$. First, we point out that since $N$ solves \eqref{eq:N}, it is given by the following explicit formula
	\begin{equation}\label{formula:N}
	N(t,\bx,\bv)
	=
	M(0,\bx-t\bv)\, e^{-\frac{t}{\eps}}
	+
	\frac{1}{\eps}
	\int_0^t
	M(s,\bx+(s-t)\bv)\,e^{\frac{s-t}{\eps}}\dD s\,.
	\end{equation}
	
	Let us first derive \eqref{estimate:N:1}. According to \eqref{formula:N}, we have
	\begin{align*}
		N(t,\bx,\bv)-N(t,\bx,\bv')
		=
		&\left(M(0,\bx-t\bv)-M(0,\bx-t\bv')\right)e^{-\frac{t}{\eps}}\\[0.8em]
		&+
		\frac{1}{\eps}
		\int_0^t
		\left(M(s,\bx+(s-t)\bv)-M(s,\bx+(s-t)\bv')\right)e^{\frac{s-t}{\eps}}\dD s \,.
	\end{align*}
	Taking the absolute value on both sides in the preceding relation and applying the triangular inequality, this yields:
	\begin{align*}
		\left|N(t,\bx,\bv)-N(t,\bx,\bv')\right|
		\leq
		&\left|M(0,\bx-t\bv)-M(0,\bx-t\bv')\right|e^{-\frac{t}{\eps}}\\[0.8em]
		&+
		\frac{1}{\eps}
		\int_0^t
		\left|M(s,\bx+(s-t)\bv)-M(s,\bx+(s-t)\bv')\right|e^{\frac{s-t}{\eps}}\dD s\,.
	\end{align*}
	According to \eqref{hyp:M}, $M$ is uniformly Lipschitz. Hence, we deduce
	\begin{equation*}
		\left|N(t,\bx,\bv)-N(t,\bx,\bv')\right|
		\leq C \left|\bv-\bv'\right| t \,e^{-\frac{t}{\eps}}
		+
		\frac{C}{\eps}\left|\bv-\bv'\right|
		\int_0^t
		(t-s) \,e^{\frac{s-t}{\eps}}
		\dD s\,,
	\end{equation*}
	where $C=\left\|\nabla_\bx M\right\|_{L^\infty}$. In the previous right-hand side, we estimate $t\, e^{-t/\eps}$ with $\eps$ and the time integral  thanks to the following relation
	\[
	\int_0^t
	(t-s) e^{\frac{s-t}{\eps}}\,
	\dD s\,=\,\int_0^t
	\sigma\, e^{-\frac{\sigma}{\eps}}\,
	\dD \sigma
	\,=\, \eps^2 - \left(\eps \,t\, e^{-\frac{t}{\eps}} + \eps^2 e^{-\frac{t}{\eps}}\right)\leq \eps^2,
	\]
	where the first equality is obtained through the change of variable $\sigma \leftarrow t-s$ whereas the last equality is obtained integrating by part with respect to $\sigma$. It yields the expected result
	\begin{equation*}
		\left|N(t,\bx,\bv)-N(t,\bx,\bv')\right|
		\leq C \eps \left|\bv-\bv'\right| .
	\end{equation*}

	We now estimate $D_t M - D_t N$ to prove \eqref{estimate:N:2}. First, we rewrite $N$ by integrating by part twice the time integral in \eqref{formula:N} and find
	\[
	N(t,\bx,\bv)
	=
	M(t,\bx)
	-\eps\, D_t M(t,\bx)
	+
	\eps\,D_t\,M(0,\bx-t\bv)\, e^{-\frac{t}{\eps}}
	+
	\eps
	\int_0^t
	D_t^2M(s,\bx+(s-t)\bv)\,e^{\frac{s-t}{\eps}}\dD s\,.
	\]
	Then, we divide the previous relation by $\eps$ and replace $(M-N)/\eps$ by $D_t N$ according to \eqref{eq:N}, this yields
	\[
	D_t M(t,\bx)
	-D_t N(t,\bx,\bv)
	=
	D_tM(0,\bx-t\bv)\, e^{-\frac{t}{\eps}}
	+
	\int_0^t
	D_t^2M(s,\bx+(s-t)\bv)\,e^{\frac{s-t}{\eps}}\dD s\,.
	\]
	Taking the absolute value on both sides and using assumption \eqref{hyp:M} to bound the derivatives of $M$, we obtain the result
	\[
	\left|D_t M(t,\bx)
	-D_t N(t,\bx,\bv)\right|
	\,\leq\,4\,\|M\|_{W^{2,\infty}}\left(\eps+e^{-\frac{t}{\eps}}\right)\left(1+|\bv|^2\right) \,.
	\]
\end{proof}

In the next proposition, we provide uniform in $\eps$ moment estimates with respect to $\bx$, $\bv$ and $y$ for the solution $q^\eps$ to \eqref{eq:q:eps}. To obtain this result, we first propagate moments with respect to $\bv$ using the integrability assumption \eqref{hyp:Lambda+} on the tumbling kernel $\Lambda$, then deduce propagation of moments with respect to $\bx$ using standard techniques for transport equations. Moments with respect to $y$ are also estimated thanks assumption \eqref{hyp:Lambda+} coupled with the estimates for $N$ in Lemma \ref{estimate:N} and the bound on moments with respect to $\bv$. For later purposes, we use the moment estimates on $\bv$ and $y$ to prove that $\Tilde{Q}^\eps\left[q^\eps\right]$ also has second order moments with respect to $y$. To conclude, we obtain propagation of moments in $\bv$ for the solution $\bar{p}$ to \eqref{eq:p:lim}-\eqref{def:tumbling:op:lim} thanks to similar arguments.
\begin{proposition}\label{moment:estimate} Under assumptions \eqref{hyp:Lambda+} on $\Lambda$, \eqref{hyp:M} on $M$ and \eqref{hyp1:p:eps} on $\left(p^\eps_0\right)_{\eps>0}$ consider a sequence of strong solutions $\left(p^\eps\right)_{\eps>0}$ to \eqref{eq:p}-\eqref{def:tumbling:op} with initial data $\left(p^\eps_0\right)_{\eps>0}$. \\
There exists a positive constant $C$ such that for all $\eps>0$ and all $t\geq0$, the function $q^\eps$ defined by \eqref{def:q:eps} satisfies 
\begin{equation}\label{moments:v} 
\int_{\R^d\times V\times \R}
|\bv|^2\,q^\eps(t,\bx,\bv,y)\,
\dD \bx\dD \bv \dD y\,<\,C\left(t+1\right),
\end{equation}
and 
\begin{equation}\label{moments:x} 
	\int_{\R^d\times V\times \R}
	|\bx|\,q^\eps(t,\bx,\bv,y)\,
	\dD \bx\dD \bv \dD y\,<\,C\left(t^{\frac{3}{2}}+1\right).
\end{equation}
Furthermore, there exists $\eps_0>0$ such that for all $\eps$ between $0$ and $\eps_0$, it holds
\begin{equation}\label{moments:y}
	\int_{\R^d\times V\times \R}|y|^2
	q^\eps(t,\bx,\bv,y)\,
	\dD \bx\dD \bv \dD y
	\,\leq\, C\left(\eps^{-2}e^{-\frac{t}{\eps}}
	\,+\,
	\eps\,t\,+\,1\right),
\end{equation}
and 
\begin{equation}\label{moments:y:2}
	\int_{\R^d\times V\times \R}  |y|^2\,
	\Tilde{Q}^\eps\left[q^\eps\right](t,\bx,\bv,y)
	\dD \bx\dD \bv \dD y\,\leq\,
	C\left(t^{\frac{1}{2}}\eps^{-1}e^{-\frac{t}{2\eps}}\,+\,\eps^{-1}e^{-\frac{t}{2\eps}}+ \,t \,+\, 1\right).
\end{equation}
Furthermore, under assumption \eqref{hyp1:p} on $\bar{p}_0$, consider a strong solution $\bar{p}$ to \eqref{eq:p:lim}-\eqref{def:tumbling:op:lim} with initial data $\bar{p}_0$. There exists a positive constant $C$ such that
\begin{equation}\label{moments:v:p} 
	\int_{\R^d\times V}
	|\bv|^2\,\bar{p}(t,\bx,\bv)\,
	\dD \bx\dD \bv\,<\,C\left(t+1\right),\quad\forall\,t\in\R^+\,.
\end{equation}
\end{proposition}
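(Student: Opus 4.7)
The plan is to derive each of the four estimates on $q^\eps$ in the stated order---they build on each other---and then to obtain \eqref{moments:v:p} for $\bar{p}$ by a direct transcription of the first step. All bounds follow from multiplying the rescaled equation \eqref{eq:q:eps} by a test function $\varphi(\bv,y)$ and integrating over $\R^d\times V\times\R$. The main technical ingredient is the symmetrization of the tumbling operator: after the Jacobian-one change of variable $y\mapsto y+(N(\bv)-N(\bv'))/\eps$ in the gain term of $\Tilde{Q}^\eps$ and the swap $\bv\leftrightarrow\bv'$, one obtains
\[
\int \varphi(\bv,y)\,\Tilde{Q}^\eps[q^\eps]\,\dD\bx\dD\bv\dD y
=
\int \Bigl[\varphi\bigl(\bv',y-\tfrac{N(\bv')-N(\bv)}{\eps}\bigr)-\varphi(\bv,y)\Bigr]\Lambda\bigl(y-D_tN(\bv),\bv',\bv\bigr)q^\eps(\bv,y)\,\dD\bv'\dD\bv\dD\bx\dD y.
\]
The Fokker-Planck operator in $y$ being a total $y$-derivative and the transport being a total $\bx$-derivative, the former vanishes whenever $\varphi$ is $y$-independent, the latter whenever $\varphi$ is $\bx$-independent. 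With $\varphi=|\bv|^2$, the symmetrization and \eqref{hyp:Lambda+} give $\int|\bv|^2\,\Tilde{Q}^\eps[q^\eps]\leq L_+$ by mass conservation (with $L_+$ the supremum in \eqref{hyp:Lambda+}), so $\frac{\dD}{\dD t}\int|\bv|^2q^\eps\leq L_+$, yielding \eqref{moments:v}. With $\varphi=|\bx|$ the tumbling contribution vanishes ($|\bx|$ being independent of both $\bv$ and $y$), and integration by parts in $\bx$ combined with Cauchy-Schwarz and \eqref{moments:v} gives $\frac{\dD}{\dD t}\int|\bx|q^\eps\leq\int|\bv|q^\eps\leq C(t+1)^{1/2}$, which integrates to \eqref{moments:x}.

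The central step is \eqref{moments:y}. With $\varphi=y^2$, integration by parts in $y$ in the Fokker-Planck term extracts the dissipation $-\tfrac{2}{\eps}\int y^2 q^\eps+\tfrac{2}{\eps}$, while the symmetrized identity above yields
\[
\int y^2\,\Tilde{Q}^\eps[q^\eps] = \int\Bigl[-\frac{2y\,(N(\bv')-N(\bv))}{\eps}+\Bigl(\frac{N(\bv')-N(\bv)}{\eps}\Bigr)^2\Bigr]\Lambda(\cdot)\,q^\eps\,\dD\bv'\dD\bv\dD\bx\dD y.
\]
Lemma \ref{estimate:N} provides $|N(\bv')-N(\bv)|/\eps\leq\|\nabla_\bx M\|_{L^\infty}|\bv'-\bv|$, so Young's inequality $2|y||\bv'-\bv|\leq y^2+|\bv'-\bv|^2$ together with \eqref{hyp:Lambda+}-\eqref{hyp:Lambda-} (which yield $\int_V|\bv'-\bv|^2\Lambda\,\dD\bv'\leq C(1+|\bv|^2)$) produce $\int y^2\Tilde{Q}^\eps[q^\eps]\leq C\int y^2 q^\eps+C(1+\int|\bv|^2 q^\eps)$. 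For $\eps\leq\eps_0$ small enough, the constant $C$ is absorbed by the $-2/\eps$ dissipation, and \eqref{moments:v} yields
\[
\frac{\dD}{\dD t}\int y^2q^\eps \,\leq\, -\frac{1}{\eps}\int y^2 q^\eps + \frac{2}{\eps}+C(t+1).
\]
Gr\"onwall's lemma, combined with the initial bound $\int y^2 q_0^\eps=\eps^{-2}\int(m-M_0)^2p_0^\eps\leq C\eps^{-2}$ (a direct consequence of \eqref{hyp1:p:eps} and $N(0)=M_0$), then yields \eqref{moments:y}; the prefactor $\eps^{-2}e^{-t/\eps}$ precisely records the absence of well-preparedness of $p_0^\eps$ and its fast damping through the methylation relaxation.

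Estimate \eqref{moments:y:2} follows from the same identity for $\int y^2\Tilde{Q}^\eps[q^\eps]$ by refining the cross-term bound via Cauchy-Schwarz: $\int|y||\bv'-\bv|\Lambda q^\eps\leq(\int y^2\Lambda q^\eps)^{1/2}(\int|\bv'-\bv|^2\Lambda q^\eps)^{1/2}\leq C(\int y^2 q^\eps)^{1/2}(1+\int|\bv|^2q^\eps)^{1/2}$. Substituting the already-established bounds \eqref{moments:y} and \eqref{moments:v} and using $\sqrt{a+b}\leq\sqrt a+\sqrt b$ produces \eqref{moments:y:2}, with the $t^{1/2}\eps^{-1}e^{-t/(2\eps)}$ term arising from $\sqrt{(t+1)\eps^{-2}e^{-t/\eps}}$. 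Finally, \eqref{moments:v:p} is the exact analogue of \eqref{moments:v} for equation \eqref{eq:p:lim}: one checks that the averaged kernel $\bar{\Lambda}$ defined in \eqref{def:Lamda:bar} inherits assumption \eqref{hyp:Lambda+} after averaging against the unit-mass Maxwellian $\cM$, and the same symmetrization argument then produces $\frac{\dD}{\dD t}\int|\bv|^2\bar p\leq C$. The main delicate point throughout is the tightness of \eqref{moments:y}: absent well-preparedness, the initial bound on $\int y^2 q_0^\eps$ scales like $\eps^{-2}$, and only by preserving the full $\eps^{-1}$ dissipation of the Fokker-Planck operator---which forces the Young parameter controlling the tumbling cross term to be chosen independently of $\eps$---can this initial blow-up be absorbed into a uniform-in-$\eps$ estimate with an exponentially decaying prefactor.
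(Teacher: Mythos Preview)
Your proof is correct and follows essentially the same approach as the paper, with the minor stylistic difference that you work throughout with the rescaled equation \eqref{eq:q:eps} via the symmetrization identity, whereas the paper reverts to the original $p^\eps$-equation for the $\bv$- and $\bx$-moments before carrying out the same Gr\"onwall argument for \eqref{moments:y}. One small slip: you invoke \eqref{hyp:Lambda-} when bounding $\int_V|\bv'-\bv|^2\Lambda\,\dD\bv'$, but the proposition assumes only \eqref{hyp:Lambda+}, which already suffices since it controls both $\int_V\Lambda\,\dD\bv'$ and $\int_V|\bv'|^2\Lambda\,\dD\bv'$.
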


\begin{proof}
	We first prove \eqref{moments:v}. We denote by $\left\||\bv|^2 p^\eps(t)\right\|_{L^1}$ the second order moments of $q^\eps$ with respect to $\bv$, that is
	\[
	\left\||\bv|^2 q^\eps(t)\right\|_{L^1}
	\,=\,
	\int_{\R^d\times V\times \R}
	|\bv|^2\,q^\eps(t,\bx,\bv,y)\,
	\dD \bx\dD \bv \dD y\,.
	\]
	To estimate $\left\||\bv|^2 q^\eps\right\|_{L^1}$, we will compute its time derivative. However, we first notice that $q^\eps$ and the solution $p^\eps$ to \eqref{eq:p} have the same second order moments with respect to $\bv$. Indeed, inverting \eqref{change:var}, we obtain 
	\[
	\left\||\bv|^2 q^\eps(t)\right\|_{L^1}
	\,=\,
	\int_{\R^d\times V\times \R}
	|\bv|^2\,p^\eps(t,\bx,\bv,m)\,
	\dD \bx\dD \bv \dD m\,.
	\]
	Therefore the time derivative of $\left\||\bv|^2 q^\eps\right\|_{L^1}$ is obtained multiplying equation \eqref{eq:p} by $|\bv|^2$ and integrating with respect to  $(\bx,\bv,m)\in \R^d\times V\times \R$, it yields
	\[
	\frac{\dD}{\dD t} \left\||\bv|^2 q^\eps(t)\right\|_{L^1}\,=\,
	\int  |\bv|^2
	\left(
	\,\partial_m \left[\,\frac{m-M}{\eps} \,p^\eps+\eps\,\partial_m p^\eps\right]
	\,-\, \bv\cdot\nabla_\bx p^\eps+Q^\eps\left[p^\eps\right]\right)
	\dD \bx\dD \bv \dD m\,.
	\]
	Let us simplify the preceding right-hand side: both contributions of the transport and the Fokker-Planck operators cancel since it holds
	\[
	\int   |\bv|^2\,\bv\cdot\nabla_\bx p^\eps
	\dD \bx\dD \bv \dD m\,= \,\int   \nabla_\bx\cdot\left(|\bv|^2\bv\, p^\eps\right)
	\dD \bx\dD \bv \dD m\,= \,0\,,
	\]
	and
	\[
	\int |\bv|^2\,\partial_m \left[\,\frac{m-M}{\eps} \,p^\eps+\eps\,\partial_m p^\eps\right]
	\dD \bx\dD \bv \dD m = \int \partial_m \left[\,|\bv|^2\left(\frac{m-M}{\eps} \,p^\eps+\eps\,\partial_m p^\eps\right)\right]
	\dD \bx\dD \bv \dD m\,=\,0\,.
	\]
	 Hence, the time derivative of $\left\||\bv|^2 q^\eps\right\|_{L^1}$ rewrites
	\begin{align*}
	\frac{\dD}{\dD t} \left\||\bv|^2 q^\eps(t)\right\|_{L^1}\,=\,
	&\int|\bv|^2\,
	\Lambda\left(\frac{1}{\eps}\left(m-M(t,\bx)\right),\bv,\bv'\right)p^\eps\left(t,\bx,\bv',m\right)  \dD \bx\dD \bv \dD \bv'\dD m\,\\[0.8em]
	&-
	\int|\bv|^2\,
	\Lambda\left(\frac{1}{\eps}\left(m-M(t,\bx)\right),\bv',\bv\right)p^\eps\left(t,\bx,\bv,m\right)  \dD \bx\dD \bv \dD \bv'\dD m
	\,.
	\end{align*}
	Since the second term in the right-hand side is negative, we bound it by $0$ and hence deduce
	\begin{equation*}
		\frac{\dD}{\dD t}\left\||\bv|^2 q^\eps(t)\right\|_{L^1}\,\leq\,
		\int|\bv|^2\,
		\Lambda\left(\frac{1}{\eps}\left(m-M(t,\bx)\right),\bv,\bv'\right)p^\eps\left(t,\bx,\bv',m\right)  \dD \bx\dD \bv \dD \bv'\dD m
		\,.
	\end{equation*}
Then, we estimate the supremum of $\int|\bv|^2\,
	\Lambda\left(\frac{1}{\eps}\left(m-M(t,\bx)\right),\bv,\bv'\right)\dD \bv$ over all $(\bx,\bv',m)\in \R^d\times V\times \R$ thanks to assumption \eqref{hyp:Lambda+}. Thanks to conservation of mass for \eqref{eq:p}, we obtain
	\begin{equation*}
		\frac{\dD}{\dD t} \left\||\bv|^2 q^\eps(t)\right\|_{L^1}\,\leq\,C
		\,.
	\end{equation*}
	We integrate the previous inequality with respect to $t$ and use assumption \eqref{hyp1:p:eps} to bound $\left\||\bv|^2 q^\eps_0\right\|_{L^1}$, which yields estimate \eqref{moments:v}:
	\[
	\left\||\bv|^2 q^\eps(t)\right\|_{L^1}\,\leq\,C(t+1)\,.
	\]
	
	The proof of estimate \eqref{moments:v:p} for moments in $\bv$ of $\bar{p}$ follows the same lines as the one detailed above for $q^\eps$. Hence we do not detail it here.\\
	
We now proceed to the proof of \eqref{moments:x}. We denote by $\left\||\bx|\, q^\eps(t)\right\|_{L^1}$ the first order moments of $q^\eps$ with respect to $\bx$, that is
\[
\left\||\bx|\, q^\eps(t)\right\|_{L^1}
\,=\,
\int_{\R^d\times V\times \R}
|\bx|\,q^\eps(t,\bx,\bv,y)\,
\dD \bx\dD \bv \dD y\,.
\]
To estimate $\left\||\bx|\, q^\eps\right\|_{L^1}$, we compute its time derivative. Again, $q^\eps$ and the solution $p^\eps$ to \eqref{eq:p} have same moments with respect to $\bx$. Indeed, inverting \eqref{change:var}, we obtain 
\[
\left\||\bx|\, q^\eps(t)\right\|_{L^1}
\,=\,
\int_{\R^d\times V\times \R}
|\bx|\,p^\eps(t,\bx,\bv,m)\,
\dD \bx\dD \bv \dD m\,.
\]
Therefore the time derivative of $\left\||\bx|\, q^\eps\right\|_{L^1}$ is obtained multiplying equation \eqref{eq:p} by $|\bx|$ and integrating with respect  $(\bx,\bv,m)\in \R^d\times V\times \R$, it yields
\[
\frac{\dD}{\dD t} \left\||\bx|\, q^\eps(t)\right\|_{L^1}\,=\,
\int  |\bx|
\left(
\,\partial_m \left[\,\frac{m-M}{\eps} \,p^\eps+\eps\,\partial_m p^\eps\right]
\,-\, \bv\cdot\nabla_\bx p^\eps+Q^\eps\left[p^\eps\right]\right)
\dD \bx\dD \bv \dD m\,.
\]
To simplify the right-hand side, we first notice that, as for moments with respect to the velocity variable, the contribution the Fokker-Planck operator cancels. The same holds for the contribution of the tumbling operator since $\int  Q^\eps\left[p^\eps\right]\dD \bv \,=\,0$. Hence, the previous relation rewrites
\[
\frac{\dD}{\dD t} \left\||\bx|\, q^\eps(t)\right\|_{L^1}\,=\,-
\int  |\bx|\,
 \bv\cdot\nabla_\bx p^\eps
\dD \bx\dD \bv \dD m\,.
\]
After integrating by part with respect to $\bx$ in the integral, applying the Cauchy-Schwarz inequality and using mass conservation for \eqref{eq:p}, we obtain
\[
\frac{\dD}{\dD t} \left\||\bx|\, q^\eps(t)\right\|_{L^1}\,\leq\,
\left(
\int 
|\bv|^2 p^\eps
\dD \bx\dD \bv \dD m\right)^\frac{1}{2}\,.
\]
We bound the right-hand side thanks to \eqref{moments:v} and deduce
\[
\frac{\dD}{\dD t} \left\||\bx|\, q^\eps(t)\right\|_{L^1}\,\leq\,
C\left(
1+t\right)^\frac{1}{2}\,.
\]
Estimate \eqref{moments:x} is obtained integrating the latter with respect to $t$ and applying assumption \eqref{hyp1:p:eps} to bound $\left\||\bx|\, q^\eps_0\right\|_{L^1}$.\\

We now derive \eqref{moments:y}. We denote by $\left\||y|^2 q^\eps(t)\right\|_{L^1}$ the second order moments of $q^\eps$ with respect to $y$, that is
\[
\left\||y|^2 q^\eps(t)\right\|_{L^1}
\,=\,
\int_{\R^d\times V\times \R}
|y|^2
q^\eps(t,\bx,\bv,y)\,
\dD \bx\dD \bv \dD y\,.
\]
To estimate $\left\||y|^2 q^\eps(t)\right\|_{L^1}$, we compute its time derivative, obtained multiplying equation \eqref{eq:q:eps} by $|y|^2$ and integrating with respect  $(\bx,\bv,y)\in \R^d\times V\times \R$, it yields
\[
\frac{\dD}{\dD t} \left\||y|^2 q^\eps(t)\right\|_{L^1}\,=\,
\int  |y|^2
\left(
\frac{1}{\eps}\,\partial_y \left[\,y \,q^\eps+\partial_y q^\eps\right]
\,-\, \bv\cdot\nabla_\bx q^\eps+\Tilde{Q}^\eps\left[q^\eps\right]\right)
\dD \bx\dD \bv \dD y\,.
\]
Let us simplify the right-hand side: first, the transport operator has no contribution, that is
\[
\int  |y|^2\,\bv\cdot\nabla_\bx q^\eps
\dD \bx\dD \bv \dD y\,= \,\int   \nabla_\bx\cdot\left(|y|^2\,\bv \,q^\eps\right)
\dD \bx\dD \bv \dD y\,= \,0\,.
\]
Second, we integrate by part with respect to $y$ the contribution associated to the Fokker-Planck operator. Since $q^\eps$ as mass $1$, it yields
\[
\int|y|^2\,\partial_y \left[\,y \,q^\eps+\partial_y q^\eps\right]
\dD \bx\dD \bv \dD y = - 2\left\||y|^2 q^\eps(t)\right\|_{L^1}+2\,.
\]
Therefore the time derivative of $\left\||y|^2 q^\eps\right\|_{L^1}$ rewrites
\[
\frac{\dD}{\dD t} \left\||y|^2 q^\eps(t)\right\|_{L^1}\,=\,
\frac{2}{\eps} \left(1- \left\||y|^2 q^\eps(t)\right\|_{L^1}\right) +\cJ\,,
\]
where $\cJ$ denotes the contribution of the tumbling operator, that is
\begin{equation}\label{def:Q}
	\cJ\,=\,
\int  |y|^2\,
\Tilde{Q}^\eps\left[q^\eps\right]
\dD \bx\dD \bv \dD y\,.
\end{equation}
To estimate $\cJ$, we rewrite it as 
$\cJ\,=\,
\cJ_1-\cJ_2$
where $\cJ_1$ and $\cJ_2$ are given by
\begin{equation*}
	\left\{
	\begin{array}{l}
		\displaystyle \cJ_1 = 	\int
		|y|^2\,
		\Lambda\left(y'-D_t N(t,\bx,\bv'),\bv,\bv'\right)q^\eps\left(t,\bx,\bv',y'\right)  \dD \bx\dD \bv \dD \bv'\dD y\,
		,\\[1.5em]
		\displaystyle
		\cJ_2\,=\,\int
		|y|^2\,
		\Lambda\left(y-D_t N(t,\bx,\bv),\bv',\bv\right)q^\eps\left(t,\bx,\bv,y\right)	\dD \bx\dD \bv \dD \bv'\dD y
		\,,
	\end{array}
	\right.
\end{equation*}
and where we used the shorthand notation 
\[y' = y + \left(N(t,\bx,\bv)-N(t,\bx,\bv')\right)/\eps\,.\]
In $\cJ_1$, we perform the change of variable $y \leftarrow y'$ and then invert variables $\bv$ and $\bv'$, this yields
\[
\cJ_1 = 	\int
\left|y + \frac{N(t,\bx,\bv)-N(t,\bx,\bv')}{\eps}\right|^2
\Lambda\left(y-D_t N(t,\bx,\bv),\bv',\bv\right)q^\eps\left(t,\bx,\bv,y\right)	\dD \bx\dD \bv \dD \bv'\dD y\,.
\]
We rewrite $\cJ$ substituting $\cJ_1$ thanks to the previous relation, this yields
\[
\cJ\,=\,
\int
\left(\left|y + \frac{N(t,\bx,\bv)-N(t,\bx,\bv')}{\eps}\right|^2-\left|y\right|^2\right)
\Lambda\left(y-D_t N(t,\bx,\bv),\bv',\bv\right)q^\eps\left(t,\bx,\bv,y\right)\dD \bx\dD \bv \dD \bv'\dD y.
\]
To estimate $\cJ$, we take the absolute value inside the preceding integral and bound the difference between squares as follows
\begin{align*}
\left|\left|y + \frac{N(t,\bx,\bv)-N(t,\bx,\bv')}{\eps}\right|^2-\left|y\right|^2\right|
&\leq
2\left|y\right|
\left|\frac{N(t,\bx,\bv)-N(t,\bx,\bv')}{\eps}\right|
+\left|\frac{N(t,\bx,\bv)-N(t,\bx,\bv')}{\eps}\right|^2\\[0.8em]
&\leq C\left(|y|\,|\bv-\bv'|+|\bv-\bv'|^2\right)\,,
\end{align*}
where the last line is obtained thanks to \eqref{estimate:N:1} in Lemma \ref{estimate:N} to estimate $N$. Hence, we obtain
\[
\cJ\,\leq\,
C
\int
\left(|y|\,|\bv-\bv'|+|\bv-\bv'|^2\right)
\Lambda\left(y-D_t N(t,\bx,\bv),\bv',\bv\right)q^\eps\left(t,\bx,\bv,y\right)	\dD \bx\dD \bv \dD \bv'\dD y\,.
\]
We apply the Cauchy-Schwarz inequality to estimate the product $|y|\,|\bv-\bv'|$ and we bound $|\bv-\bv'|^2$ by $2(|\bv|^2+|\bv'|^2)$, which yields
\[
\cJ\,\leq\,
C\left(\cJ_3^{\frac{1}{2}}\,\cJ_4^{\frac{1}{2}} + \cJ_4\right)\,,
\]
where $\cJ_3$ and $\cJ_4$ are defined as
\begin{equation*}
	\left\{
	\begin{array}{l}
		\displaystyle \cJ_3 = 	\int |y|^2
		\Lambda\left(y-D_t N(t,\bx,\bv),\bv',\bv\right)q^\eps\left(t,\bx,\bv,y\right)	\dD \bx\dD \bv \dD \bv'\dD y
		,\\[1.5em]
		\displaystyle
		\cJ_4\,=\,\int \left(|\bv|^2+|\bv'|^2\right)
		\Lambda\left(y-D_t N(t,\bx,\bv),\bv',\bv\right)q^\eps\left(t,\bx,\bv,y\right)	\dD \bx\dD \bv \dD \bv'\dD y
		\,.
	\end{array}
	\right.
\end{equation*}
In both $\cJ_3$ and $\cJ_4$, we integrate $\Lambda$ with respect to $\bv'$ and estimate $\int(1+|\bv'|^2)
\Lambda\left(y-D_t N(t,\bx,\bv),\bv',\bv\right)\dD \bv'$ by its supremum over all $(\bx,\bv,y)\in \R^d\times V\times \R$, which is finite according to assumption \eqref{hyp:Lambda+}. Since the total mass of $q^\eps$ is conserved, we deduce 
\[
\cJ\,\leq\,
C\left(\left(\int |y|^2
q^\eps	\dD \bx\dD \bv\dD y\right)^{\frac{1}{2}}\left(\int |\bv|^2
q^\eps	\dD \bx\dD \bv\dD y+1\right)^{\frac{1}{2}} + \int |\bv|^2
q^\eps	\dD \bx\dD \bv\dD y \,+\, 1\right)\,.
\]
We bound moments with respect to $\bv$ thanks to \eqref{moments:v} in Proposition \ref{moment:estimate} and deduce the following estimate for $\cJ$
\begin{equation}\label{estimate:Q}
\cJ\,\leq\,
C\left((t+1)^{\frac{1}{2}}\left(\int |y|^2
q^\eps	\dD \bx\dD \bv\dD y\right)^{\frac{1}{2}} + \,t \,+\, 1\right)\,.
\end{equation}
After applying Young inequality, we obtain 
\begin{equation*}
	\cJ\,\leq\,
	C\left(\left\||y|^2 q^\eps(t)\right\|_{L^1}\,+ \,t \,+\, 1\right)\,.
\end{equation*}
Therefore, we get the following differential inequality for $\left\||y|^2 q^\eps\right\|_{L^1}$
\[
\frac{\dD}{\dD t} \left\||y|^2 q^\eps(t)\right\|_{L^1}\,\leq\,\left(C-\frac{2}{\eps}\right)\left\||y|^2 q^\eps(t)\right\|_{L^1}
+\frac{1}{\eps} +C(t+1)\,.
\]
We multiply this inequality by $\exp{\left((2/\eps-C)\,t\right)}$ and integrate with respect to the time variable, which yields
\[
\left\||y|^2 q^\eps(t)\right\|_{L^1}\,\leq\, \left\||y|^2 q^\eps_0\right\|_{L^1}\,e^{\left(C-\frac{2}{\eps}\right)t}
\,+\,\frac{C\eps+1}{2-C\eps}\left(1-e^{\left(C-\frac{2}{\eps}\right)t}\right)
\,+\,C\int_0^t s\,e^{\left(C-\frac{2}{\eps}\right)(t-s)}\,\dD s\,.
\]
We compute the time integral on the right-hand side thanks to an integration by part and obtain 
\begin{align*}
\left\||y|^2 q^\eps(t)\right\|_{L^1}\,\leq\, \left\||y|^2 q^\eps_0\right\|_{L^1}\,e^{\left(C-\frac{2}{\eps}\right)t}
\,+\,\frac{C\eps+1}{2-C\eps}&\left(1-e^{\left(C-\frac{2}{\eps}\right)t}\right)\\[0.8em]
&\,+\,
\frac{C\eps\,t}{2-C\eps}\,-\,
\frac{C\eps^2}{\left(2-C\eps\right)^2}\left(1-e^{\left(C-\frac{2}{\eps}\right)t}\right)
\,.
\end{align*}
Taking $\eps\leq1/C$, we obtain
\[
\left\||y|^2 q^\eps(t)\right\|_{L^1}\,\leq\, \left\||y|^2 q^\eps_0\right\|_{L^1}\,e^{-\frac{t}{\eps}}
\,+\,2
\,+\,
C\eps\,t\,.
\]
To estimate $\left\||y|^2 q^\eps_0\right\|_{L^1}$, we invert the change of variable \eqref{change:var}  with $t=0$ and obtain
\[
\left\||y|^2 q^\eps(t)\right\|_{L^1}\,\leq\, e^{-\frac{t}{\eps}}\int_{\R^d\times V\times \R}
\left|\frac{m-M(0,\bx)}{\eps}\right|^2\,p^\eps_0(\bx,\bv,m)\,
\dD \bx\dD \bv \dD m
\,+\,2
\,+\,
C\eps\,t\,.
\]
We estimate the integral on the right-hand side thanks to assumption \eqref{hyp1:p:eps} to bound the moments of $p^\eps$ with respect to $m$, and assumption \eqref{hyp:M} to bound $M(0,\bx)$. This yields the expected result
\[
\int_{\R^d\times V\times \R}
|y|^2
q^\eps(t,\bx,\bv,y)\,
\dD \bx\dD \bv \dD y
\,\leq\, C\left(\eps^{-2}e^{-\frac{t}{\eps}}
\,+\,
\eps\,t\,+\,1\right).
\]
To conclude this proof, we derive estimate \eqref{moments:y:2} as a straightforward consequence of our previous computations. Indeed, according to  \eqref{estimate:Q}, it holds
\begin{equation*}
	\cJ\,\leq\,
	C\left((t+1)^{\frac{1}{2}}\left(\int |y|^2
	q^\eps	\dD \bx\dD \bv\dD y\right)^{\frac{1}{2}} + \,t \,+\, 1\right),
\end{equation*}
where $\cJ$ is defined by \eqref{def:Q}. We bound  $\ds\int |y|^2
q^\eps	\dD \bx\dD \bv\dD y$ in the right-hand side thanks to \eqref{moments:y} in Proposition \ref{moment:estimate}, which yields
\begin{equation*}
	\cJ\,\leq\,
	C\left((t+1)^{\frac{1}{2}}\left(\eps^{-2}e^{-\frac{t}{\eps}}
	\,+\,
	\eps\,t\,+\,1\right)^{\frac{1}{2}} + \,t \,+\, 1\right).
\end{equation*}
Then, simple computations yield the result
\begin{equation*}
	\cJ\,\leq\,
	C\left(t^{\frac{1}{2}}\eps^{-1}e^{-\frac{t}{2\eps}}\,+\,\eps^{-1}e^{-\frac{t}{2\eps}}+ \,t \,+\, 1\right).
\end{equation*}
\end{proof}
\section{Convergence of the re-scaled distribution}
\label{sec:cv:estimates}
This section is dedicated to the convergence of $q^\eps$ defined by \eqref{def:q:eps} toward $\bar{p}\,M$, where $\bar{p}$ is the solution to \eqref{eq:p:lim}-\eqref{def:tumbling:op:lim} and $\cM$ stands for the standard Maxwellian and is given in \eqref{Maxwellian}. We proceed in two steps: first, we prove the convergence of $q^\eps$ towards its Maxwellian local equilibrium, that is
\[
q^\eps(t,\bx,\bv,y)\underset{\eps\rightarrow 0}{\sim}\bar{q}^\eps(t,\bx,\bv)\cM(y)\,,
\]
where the marginal $\bar{q}^\eps$ is given by \eqref{def:q:eps:bar}. Second, we use the preceding convergence to deduce the convergence of $\bar{q}^\eps$ toward $\bar{p}$.\\

To derive the convergence of $q^\eps$ towards its Maxwellian local equilibrium, we estimate the following relative energy along the trajectories of equation \eqref{eq:q:eps}
\[
E\left[q^\eps\right]
\,=\,
\int_{\R^d\times V\times \R}
q^\eps(t,\bx,\bv,y)
\log{\left(\frac{q^\eps(t,\bx,\bv,y)}{\cM(y)}\right)}\dD \bx\dD \bv \dD y\,,
\]
where $\cM$ is given by \eqref{Maxwellian}. We prove that the dissipation rate of this quantity controls the distance between $q^\eps$ and $\bar{q}^\eps\cM$. We obtain the following convergence estimate: 
\begin{proposition}\label{energy:estimate} Under assumptions \eqref{hyp:Lambda+}-\eqref{hyp:Lambda-} on $\Lambda$, \eqref{hyp:M} on $M$ and \eqref{hyp1:p:eps} on $\left(p^\eps_0\right)_{\eps>0}$ consider a sequence of strong solutions $\left(p^\eps\right)_{\eps>0}$ to \eqref{eq:p}-\eqref{def:tumbling:op} with initial data $\left(p^\eps_0\right)_{\eps>0}$. \\
There exist two positive constants $C$ and $\eps_0$ such that for all $\eps$ between $0$ and $\eps_0$, it holds
\[
\int_0^t \left\|\,
q^\eps(s)\,-\,\ols{q}^\eps(s)\,
\mathcal{M}\,
\right\|^2_{L^1
	\left(
	\R^d\times V\times \R
	\right)
}\dD s\,\leq\,\eps\,E[q^\eps_0] \,+\,
C\,\eps\left(t^2 \,+\, 1\right)\,,\quad\forall\,t \in\R^+\,.
\]
\end{proposition}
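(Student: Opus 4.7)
The strategy is to perform a relative entropy estimate for $E[q^\eps]$ along the flow of \eqref{eq:q:eps} and then convert the resulting Fisher-information dissipation into an $L^1$ bound via a logarithmic Sobolev inequality together with Csisz\'ar-Kullback-Pinsker. First I would compute $\frac{d}{dt}E[q^\eps]$ by multiplying \eqref{eq:q:eps} by $\log(q^\eps/\cM)+1$ and integrating over $(\bx,\bv,y)$. The transport contribution $\bv\cdot\nabla_\bx q^\eps$ vanishes since $\cM$ is independent of $\bx$. Rewriting the Fokker-Planck term as $\partial_y\left[\cM\,\partial_y(q^\eps/\cM)\right]$ and integrating by parts twice in $y$ produces the dissipation
\[
-\frac{1}{\eps}\,I_y[q^\eps]\,:=\,-\frac{1}{\eps}\int q^\eps\,\bigl|\partial_y\log(q^\eps/\cM)\bigr|^2\,d\bx d\bv dy,
\]
leaving the tumbling contribution $T:=\int\log(q^\eps/\cM)\,\Tilde{Q}^\eps[q^\eps]\,d\bx d\bv dy$ to be controlled.

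To convert $I_y$ into the desired norm, I would apply Gross's logarithmic Sobolev inequality for the standard one-dimensional Gaussian to the conditional density $q^\eps/\bar{q}^\eps$ (in the $y$ variable) with respect to $\cM$, then average against $\bar{q}^\eps$ in $(\bx,\bv)$; this gives $E_1:=\int q^\eps\log\bigl(q^\eps/(\bar{q}^\eps\cM)\bigr) \leq \tfrac12 I_y[q^\eps]$. Combining with the Csisz\'ar-Kullback-Pinsker inequality applied pointwise in $(\bx,\bv)$ and Cauchy-Schwarz (using $\int\bar{q}^\eps=1$) yields $\|q^\eps-\bar{q}^\eps\cM\|_{L^1}^2\leq 2E_1\leq I_y[q^\eps]$, which is the key pointwise-in-time inequality that converts the dissipation into an $L^2_t L^1_{x,v,y}$ bound.

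The main technical step is to estimate $T$. Using the label swap $\bv\leftrightarrow\bv'$ together with the change of variable $y\to y+c$ (where $c=(N(\bv)-N(\bv'))/\eps$) already exploited in the proof of Proposition \ref{moment:estimate}, one rewrites
\[
T=\int\bigl[\log F(\bv',y+c)-\log F(\bv,y)\bigr]\,q^\eps(\bv,y)\,\Lambda(y-D_t N(\bv),\bv',\bv)\,d\bx d\bv d\bv' dy,
\]
with $F=q^\eps/\cM$. Splitting $\log F=\log q^\eps-\log\cM$ and using the explicit identity $\log\cM(y)-\log\cM(y+c)=yc+c^2/2$ isolates a polynomial contribution, controlled by the Lipschitz bound $|c|\leq\|\nabla_\bx M\|_{L^\infty}|\bv-\bv'|$ from Lemma \ref{estimate:N} combined with the moment estimates \eqref{moments:v} and \eqref{moments:y}. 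The remaining $\log q^\eps$ contribution is handled by the pointwise inequality $a(\log b-\log a)\leq b-a$, the integrability assumptions \eqref{hyp:Lambda+}-\eqref{hyp:Lambda-} on $\Lambda$, and the standard entropy-moment inequality bounding $\int q^\eps|\log q^\eps|$ by the moments of Proposition \ref{moment:estimate}. Gathering everything provides $\int_0^t T(s)\,ds\leq C(1+t^2)$.

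Finally, integrating $\frac{d}{dt}E[q^\eps]\leq-I_y/\eps+T$ between $0$ and $t$ and multiplying by $\eps$ gives
\[
\int_0^t I_y\,ds\leq\eps\bigl(E[q^\eps_0]-E[q^\eps(t)]\bigr)+\eps\int_0^t T\,ds\,;
\]
bounding $-E[q^\eps(t)]\leq H(\bar{q}^\eps(t))\leq C(1+t^{3/2})$ by factoring out the marginal and invoking the classical entropy-moment inequality together with \eqref{moments:v}-\eqref{moments:x}, and combining with the LSI+Pinsker bound above, one obtains $\int_0^t\|q^\eps-\bar{q}^\eps\cM\|_{L^1}^2\,ds\leq\eps E[q^\eps_0]+C\eps(t^2+1)$. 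The main obstacle will be the estimate of $T$: the $y$-shift in the gain term breaks the symmetry that would otherwise yield a nonpositive contribution, so one must pay the cost of this shift using the $\eps$-smallness $|c|\leq C|\bv-\bv'|$ from Lemma \ref{estimate:N} and track the resulting error with the available moment bounds.
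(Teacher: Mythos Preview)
Your proposal is correct and follows the same route as the paper: entropy dissipation from the Fokker--Planck term, splitting the tumbling contribution via $\log(q^\eps/\cM)=\log q^\eps+\tfrac12|y|^2+\text{const}$ into the paper's $\cK$ and $\tfrac12\cJ$, bounding $\cK$ through $a(\log b-\log a)\le b-a$ together with \eqref{hyp:Lambda-}, invoking \eqref{moments:y:2} for $\cJ$, controlling $-E[q^\eps(t)]$ by moments, and closing with the Gaussian log-Sobolev inequality plus Csisz\'ar--Kullback. One small remark: the entropy--moment bound on $\int q^\eps|\log q^\eps|$ you list for the $\log q^\eps$ part of $T$ is not needed, since after applying $a(\log b-\log a)\le b-a$ no logarithm remains and the resulting integral is controlled directly by \eqref{hyp:Lambda-} after the change of variable, exactly as in the paper's treatment of $\cK$.
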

\begin{proof} To compute the time derivative of the relative energy $E[q^\eps]$, we multiply equation \eqref{eq:q:eps} by $\log{\left(q^\eps/\cM\right)}$ and integrate with respect to $\bx$, $\bv$ and $y$. Since mass is conserved along the trajectories of \eqref{eq:q:eps}, we obtain
\[
\frac{\dD}{\dD t} E[q^\eps]\,=\,
\int  \log{\left(\frac{q^\eps}{\cM}\right)}
\left(
 \frac{1}{\eps}\,\partial_y \left[\,y \,q^\eps+\partial_y q^\eps\right]
\,-\, \bv\cdot\nabla_\bx q^\eps+\Tilde{Q}^\eps\left[q^\eps\right]\right)
\dD \bx\dD \bv \dD y\,.
\]
We first simplify the previous right-hand side: since $\cM$ does not depend on $\bx$, the transport operator has no contribution, that is
\[
\int  \log{\left(\frac{q^\eps}{\cM}\right)}\bv\cdot\nabla_\bx q^\eps
\dD \bx\dD \bv \dD y\,= \,0\,.
\]
Furthermore, the Fokker-Planck operator has a signed contribution. Indeed, an integration by part yields
\[
\int  \log{\left(\frac{q^\eps}{\cM}\right)}\partial_y \left[\,y \,q^\eps+\partial_y q^\eps\right]
\dD \bx\dD \bv \dD y = - I[q^\eps]\,,
\]
where the Fisher information is defined as follows
\[
I[q^\eps]\,=\,\int  \left|\partial_y\log{\left(\frac{q^\eps}{\cM}\right)}\right|^2 q^\eps
\dD \bx\dD \bv \dD y \,.
\]
Consequent to the last two remarks, we obtain
\[
\frac{\dD}{\dD t} E[q^\eps] \,+\, \frac{1}{\eps}\,I[q^\eps]\,=\,
\int  \log{\left(\frac{q^\eps}{\cM}\right)}\Tilde{Q}^\eps\left[q^\eps\right]
\dD \bx\dD \bv \dD y\,.
\]
To estimate the contribution of the tumbling operator, we decompose it as follows
\[
\int  \log{\left(\frac{q^\eps}{\cM}\right)}\Tilde{Q}^\eps\left[q^\eps\right]
\dD \bx\dD \bv \dD y\,=\,
\frac{1}{2}\,\cJ+ \cK\,,
\]
where $\cJ$ and $\cK$ are given by
\begin{equation*}
	\left\{
	\begin{array}{l}
		\displaystyle \cJ = \int  |y|^2\,
		\Tilde{Q}^\eps\left[q^\eps\right]
		\dD \bx\dD \bv \dD y\,
		,\\[1.5em]
		\displaystyle
		\cK\,=\,\int  \log{\left(q^\eps\right)}\,\Tilde{Q}^\eps\left[q^\eps\right]
		\dD \bx\dD \bv \dD y
		\,.
	\end{array}
	\right.
\end{equation*}
We use dissipation properties of the tumbling kernel to estimate $\cK$, whereas $\cJ$ was already estimated in Proposition \ref{moment:estimate}, estimate \eqref{moments:y:2}.\\

We first estimate $\cK$, which is given by
\begin{align*}
	\cK = 	\int&
		\Lambda\left(y'-D_t N(t,\bx,\bv'),\bv,\bv'\right)q^\eps\left(t,\bx,\bv',y'\right)\log{\left(q^\eps\left(t,\bx,\bv,y\right)\right)}	\dD \bx\dD \bv \dD \bv'\dD y\\[0.5em]
		&- \int
		\Lambda\left(y-D_t N(t,\bx,\bv),\bv',\bv\right)q^\eps\left(t,\bx,\bv,y\right)\log{\left(q^\eps\left(t,\bx,\bv,y\right)\right)}	\dD \bx\dD \bv \dD \bv'\dD y
		\,,
\end{align*}
where we used the shorthand notation $y' = y + \left(N(t,\bx,\bv)-N(t,\bx,\bv')\right)/\eps$. In the second term of the former right-hand side, we perform the change of variable $y \leftarrow y + \left(N(t,\bx,\bv)-N(t,\bx,\bv')\right)/\eps$ and invert $\bv$ and $\bv'$, which yields
\[
\cK = 	\int
\Lambda\left(y'-D_t N(t,\bx,\bv'),\bv,\bv'\right)q^\eps\left(t,\bx,\bv',y'\right)\log{\left(\frac{q^\eps\left(t,\bx,\bv,y\right)}{q^\eps\left(t,\bx,\bv',y'\right)}\right)}	\dD \bx\dD \bv \dD \bv'\dD y\,.
\]
We estimate the logarithm in the right-hand side thanks to the inequality $\log{\left(x\right)}\leq x$, this yields
\[
\cK\,\leq\,
\int
\Lambda\left(y-D_t N(t,\bx,\bv),\bv,\bv'\right)q^\eps\left(t,\bx,\bv,y\right)	\dD \bx\dD \bv \dD \bv'\dD y\,,
\]
where we also used that $y'-D_t N(t,\bx,\bv') = y-D_t N(t,\bx,\bv)$. Thanks to assumption \eqref{hyp:Lambda-}, we estimate $\int
\Lambda\left(y-D_t N(t,\bx,\bv),\bv,\bv'\right)\dD \bv'$ by its supremum over all $(\bx,\bv,y)\in \R^d\times V\times \R$. Since the mass of $q^\eps$ is conserved, this yields
\[
\cK\,\leq\,C\,.
\]

To estimate $\cJ$, we apply \eqref{moments:y:2}, which ensures
\begin{equation*}
	\cJ\,\leq\,
	C\left(t^{\frac{1}{2}}\eps^{-1}e^{-\frac{t}{2\eps}}\,+\,\eps^{-1}e^{-\frac{t}{2\eps}}+ \,t \,+\, 1\right).
\end{equation*}
Gathering estimates for $\cJ$ and $\cK$, we deduce
\[
\frac{\dD}{\dD t} E[q^\eps] \,+\, \frac{1}{\eps}I[q^\eps]\,\leq\,
	C\left(t^{\frac{1}{2}}\eps^{-1}e^{-\frac{t}{2\eps}}\,+\,\eps^{-1}e^{-\frac{t}{2\eps}}+ \,t \,+\, 1\right).
\]
Integrating the inequality between $0$ and $t$, we obtain
\[
 \frac{1}{\eps}\int_0^t I[q^\eps(s)]\,\dD s\,\leq\,E[q^\eps_0] \,-\,E[q^\eps(t)] \,+\,
C\left(t^2 \,+\, 1\right),
\]
where we used
\[\eps^{-1}\int_0^t s^{\frac{1}{2}}e^{-\frac{s}{2\eps}}\,\dD s\leq\sqrt{\eps}\int_0^{+\infty} \sigma^{\frac{1}{2}}e^{-\frac{\sigma}{2}}\,\dD s\leq C.\] On the one hand, the Fisher information $I[q^\eps(s)]$ controls the $L^1$-norm between $q^\eps(s)$ and $\bar{q}^\eps(s)\cM$ since we have
\[
\left\|\,
q^\eps(s)\,-\,\ols{q}^\eps(s)\,
\mathcal{M}\,
\right\|^2_{L^1
	\left(
	\R^d\times V\times \R
	\right)
}
\,\leq\,
2\,
\int_{\R^d\times V\times \R}
q^\eps(s)\log\left(
\frac{q^\eps(s)}{
 \ols{q}^\eps(s)\,\mathcal{M}
}
\right)\dD \bx\dD \bv\dD y
\,\leq
\,
I
\left[\,
q^\eps(s)
\right]\,,
\]
where first and second inequalities correspond respectively to Csizár-Kullback and Gaussian logarithmic Sobolev (see \cite{Fathi/Indrei/Ledoux}) inequalities. On the other hand, we have 
\begin{align*}
-\,E[q^\eps(t)]
\,&=\,-
\int
q^\eps(t,\bx,\bv,y)\left(
\log{\left(\frac{q^\eps(t,\bx,\bv,y)}{\cM(y)\cM(|\bv|)\cM(|\bx|^{\frac{1}{2}})}\right)}
-\frac{1}{2}|\bv|^2-\frac{1}{2}|\bx|-\log{\left(2\pi\right)}
\right)
\dD \bx\dD \bv \dD y\\
&\leq\,
\int
q^\eps(t,\bx,\bv,y)\left(
\log{\left(\int\cM(|\bv|)\cM(|\bx|^{\frac{1}{2}})\dD \bx\dD \bv)\right)}
+\frac{1}{2}|\bv|^2+\frac{1}{2}|\bx|+\log{\left(2\pi\right)}
\right)
\dD \bx\dD \bv \dD y\\
&\leq\,C\left(t^{\frac{3}{2}}+1\right)
\,,
\end{align*}
where we used Jensen inequality with the measure $\cM(y)\cM(|\bv|)\cM(|\bx|^{\frac{1}{2}})$ to obtain the second line and estimates \eqref{moments:v}-\eqref{moments:x} to bound moments at the last line. Gathering these two estimates, we obtain the result
\[
\int_0^t \left\|\,
q^\eps(s)\,-\,\ols{q}^\eps(s)\,
\mathcal{M}\,
\right\|^2_{L^1
	\left(
	\R^d\times V\times \R
	\right)
}\dD s\,\leq\,\eps\,E[q^\eps_0] \,+\,
C\,\eps\left(t^2 +\, 1\right).
\]
\end{proof}
To complete this section, we estimate the distance between $\bar{q}^\eps$ given in \eqref{def:q:eps:bar} and the solution $\bar{p}$ to limiting model \eqref{eq:p:lim}-\eqref{def:tumbling:op:lim}. The key point consists in taking advantage of the dissipation properties of the limiting tumbling kernel $Q$, which has a signed contribution to the error between $\bar{q}^\eps$ and $\bar{p}$. Then, remaining contributions are error terms that we estimate thanks to our previous convergence estimates for $N$ towards $M$ and $q^\eps$ towards $\ols{q}^\eps \mathcal{M}$. 
\begin{proposition}\label{cv:q:p:bar}
Under assumptions \eqref{hyp:Lambda+}-\eqref{hyp:Lambda-} on $\Lambda$, \eqref{hyp:M} on $M$, \eqref{hyp1:p:eps} on $\left(p^\eps_0\right)_{\eps>0}$ and \eqref{hyp1:p} on $\bar{p}_0$, consider a sequence of strong solutions $\left(p^\eps\right)_{\eps>0}$ to \eqref{eq:p}-\eqref{def:tumbling:op} with initial data $\left(p^\eps_0\right)_{\eps>0}$ and a strong solution $\bar{p}$ to \eqref{eq:p:lim}-\eqref{def:tumbling:op:lim} with initial data $\bar{p}_0$. \\
There exist two positive constants $C$ and $\eps_0$ such that for all $\eps$ between $0$ and $\eps_0$, it holds
\[
\left\|\bar{p}(t)-\bar{q}^\eps(t)\right\|_{L^1}
\,\leq\,
\left\|\bar{p}_0-\bar{q}^\eps_0\right\|_{L^1}
+\,
C\left(\sqrt{\eps\, t}\,E[q^\eps_0]^{\frac{1}{2}} \,+\,
\sqrt{\eps\, t}\,\left(t \,+\, 1\right)
+
\eps^2\right)
\,,\quad\forall \,t \in \R^+\,.
\]
\end{proposition}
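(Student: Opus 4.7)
The strategy is to derive the equation satisfied by $\bar q^\eps$ by integrating \eqref{eq:q:eps} in $y$ (the Fokker--Planck term vanishes thanks to decay at $y=\pm\infty$, justified by the second moment \eqref{moments:y}),
\begin{equation*}
\partial_t\bar q^\eps+\bv\cdot\nabla_\bx\bar q^\eps\,=\,\int_\R\tilde Q^\eps[q^\eps]\,\dD y,
\end{equation*}
and to compare it with \eqref{eq:p:lim} through an $L^1$ argument for $h^\eps:=\bar p-\bar q^\eps$. Subtracting yields $\partial_t h^\eps+\bv\cdot\nabla_\bx h^\eps=Q[h^\eps]+R^\eps$ with consistency error $R^\eps:=Q[\bar q^\eps]-\int_\R\tilde Q^\eps[q^\eps]\,\dD y$. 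Multiplying by $\sgn(h^\eps)$ and integrating in $(\bx,\bv)$, the transport term vanishes, while the standard $L^1$ contraction for conservative gain/loss operators---obtained by bounding $|\sgn(h^\eps)|\leq 1$ in the gain part, swapping $\bv\leftrightarrow\bv'$, and cancelling with the loss part through mass balance---gives $\int\sgn(h^\eps)\,Q[h^\eps]\,\dD\bx\dD\bv\leq 0$. Consequently $\tfrac{\dD}{\dD t}\|h^\eps(t)\|_{L^1}\leq\|R^\eps(t)\|_{L^1(\R^d\times V)}$, and the proof reduces to estimating $R^\eps$ and integrating in time.

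To control $R^\eps$, I would split $R^\eps=R_1^\eps+R_2^\eps$ by inserting the decomposition $q^\eps=\bar q^\eps\,\cM+(q^\eps-\bar q^\eps\,\cM)$ into $\int_\R\tilde Q^\eps[q^\eps]\,\dD y$ and identifying the limiting kernel via \eqref{def:Lamda:bar}. The term $R_1^\eps$ is linear in the local-equilibrium defect $q^\eps-\bar q^\eps\,\cM$; applying Fubini together with assumptions \eqref{hyp:Lambda+}--\eqref{hyp:Lambda-} to bound the various velocity integrals of $\Lambda$ yields
\begin{equation*}
\|R_1^\eps\|_{L^1(\R^d\times V)}\,\leq\, C\,\|q^\eps-\bar q^\eps\,\cM\|_{L^1(\R^d\times V\times\R)}.
\end{equation*}
The term $R_2^\eps$ collects the contributions proportional to $\bar\Lambda(D_t N(\bv'),\cdot,\cdot)-\bar\Lambda(D_t M,\cdot,\cdot)$ (and its loss analogue), and no regularity of $\Lambda$ in its scalar argument is available. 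The key observation is that the smoothness can be transferred onto the Maxwellian $\cM$: differentiating \eqref{def:Lamda:bar} formally gives
\begin{equation*}
\partial_m\bar\Lambda(m,\bv,\bv')\,=\,\int_\R\Lambda(y-m,\bv,\bv')\,\cM'(y)\,\dD y,
\end{equation*}
and integrating the absolute value over $\bv$ using $\|\cM'\|_{L^1(\R)}<\infty$ together with \eqref{hyp:Lambda+} yields $\int_V|\partial_m\bar\Lambda(m,\bv,\bv')|\,\dD\bv\leq C$ uniformly in $(m,\bv')$. Combining this Lipschitz-in-$m$ bound with \eqref{estimate:N:2} from Lemma \ref{estimate:N} and the $\bv$-moment estimate \eqref{moments:v} gives $\|R_2^\eps\|_{L^1}\leq C\,(\eps+e^{-t/\eps})(1+t)$.

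It then remains to integrate the differential inequality in time. For $R_1^\eps$, Cauchy--Schwarz in $t$ combined with Proposition \ref{energy:estimate} gives
\begin{equation*}
\int_0^t\|R_1^\eps(s)\|_{L^1}\,\dD s\,\leq\,C\,\sqrt t\,\bigl(\eps\,E[q^\eps_0]+C\,\eps\,(t^2+1)\bigr)^{1/2}\,\leq\,C\,\sqrt{\eps\,t}\,\bigl(E[q^\eps_0]^{1/2}+t+1\bigr),
\end{equation*}
matching the first two $\eps$-dependent terms in the statement. For $R_2^\eps$, a direct computation of $\int_0^t(\eps+e^{-s/\eps})(1+s)\,\dD s$---in particular one integration by parts to handle $\int_0^t e^{-s/\eps}s\,\dD s$---produces a polynomial-in-$t$ bulk contribution absorbed into the $\sqrt{\eps t}(t+1)$ term together with an initial-layer remainder of order $\eps^2$. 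The main obstacle is precisely the estimate of $R_2^\eps$ under only the integrability conditions \eqref{hyp:Lambda+}--\eqref{hyp:Lambda-} on $\Lambda$, which is overcome by the Maxwellian-regularity transfer identity above.
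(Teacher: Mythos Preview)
Your overall strategy coincides with the paper's: derive the $\bar q^\eps$-equation, use $L^1$-contraction of the tumbling operator on $h^\eps=\bar p-\bar q^\eps$, and split the consistency error into a local-equilibrium defect piece (your $R_1^\eps$, the paper's $\cL_{31}-\cL_{32}$) and a $D_tM$ vs.\ $D_tN$ piece (your $R_2^\eps$, the paper's $\cL_{11}-\cL_{12}$). The organizational differences are cosmetic: the paper runs the contraction with the $N$-dependent kernel rather than the limiting $Q$, and it handles the $N$ vs.\ $M$ error by shifting $y\leftarrow y-D_tM+D_tN$ inside the Maxwellian and invoking Csisz\'ar--Kullback, whereas you transfer regularity onto $\cM$ via $\partial_m\bar\Lambda=\int\Lambda\,\cM'\,\dD y$. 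Both routes exploit exactly the same mechanism (smoothness of $\cM$, not $\Lambda$) and yield the same linear bound $\lesssim|D_tM-D_tN|$.

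There is, however, a genuine gap in your final time integration of $R_2^\eps$. From $\|R_2^\eps(t)\|_{L^1}\leq C(\eps+e^{-t/\eps})(1+t)$ alone you obtain
\[
\int_0^t\|R_2^\eps(s)\|_{L^1}\,\dD s\;\leq\;C\left(\eps\,t^2+\eps\,t+\eps+\eps^2\right),
\]
and the leading term $\eps\,t^2$ is \emph{not} absorbed into $\sqrt{\eps t}\,(t+1)$ once $t>1/\eps$, since $\eps t^2/(\sqrt{\eps}\,t^{3/2})=\sqrt{\eps t}>1$ there. The paper avoids this by pairing the Lipschitz bound with the trivial cap $\|R_2^\eps\|_{L^1}\leq C$ (both $Q[\bar q^\eps]$ and $\int\tilde Q^\eps[\bar q^\eps\cM]\,\dD y$ have bounded $L^1$ norms under \eqref{hyp:Lambda+}--\eqref{hyp:Lambda-}), giving
\[
\|R_2^\eps(t)\|_{L^1}\;\leq\;C\min\!\left(1,\;(\eps+e^{-t/\eps})(1+t)\right),
\]
hence $\int_0^t\|R_2^\eps\|_{L^1}\,\dD s\leq C\min(t,\eps t^2+\eps^2)$, and then $\min(t,\eps t^2)\leq t\min(1,\eps t)\leq\sqrt{\eps t}\,(t+1)$ by treating $t\le 1/\eps$ and $t>1/\eps$ separately. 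You need to insert this $\min$ step; without it the estimate does not close into the stated form for large times.
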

\begin{proof}
To simplify notations, we omit the dependence with respect to $(t,\bx)\in\R^+\times\R^d$ when the context is clear. To estimate $\left\|\bar{p}(t)-\bar{q}^\eps(t)\right\|_{L^1\left(\R^d\times V\right)}$, we first compute the equation solved by $\bar{q}^\eps$, which is obtained integrating \eqref{eq:q:eps} with respect to $y\in\R$, that is
\begin{equation}
	\label{eq:q:eps:bar}
	\ds\frac{\partial \bar{q}^\eps}{\partial t}\,+\,\bv\cdot\nabla_\bx \bar{q}^\eps
	\,=\,\int_{\R}\Tilde{Q}^\eps\left[q^\eps\right]\dD y\,,
\end{equation}
where 
\[
\int\Tilde{Q}^\eps\left[q^\eps\right]\dD y
\,=\,
\int
\Lambda\left(y'-D_t N(\bv'),\bv,\bv'\right)q^\eps\left(\bv',y'\right)-
\Lambda\left(y-D_t N(\bv),\bv',\bv\right)q^\eps\left(\bv,y\right) \dD \bv'\dD y
\,,
\]
and where we used the shorthand notation 
\[y' = y + \left(N(t,\bx,\bv)-N(t,\bx,\bv')\right)/\eps\,.\]
We perform the change of variable $y \leftarrow y + \left(N(t,\bx,\bv)-N(t,\bx,\bv')\right)/\eps$ in the first term of the previous right-hand side and deduce
\[
\int\Tilde{Q}^\eps\left[q^\eps\right]\dD y
\,=\,
\int
\Lambda\left(y-D_t N(\bv'),\bv,\bv'\right)q^\eps\left(\bv',y\right)-
\Lambda\left(y-D_t N(\bv),\bv',\bv\right)q^\eps\left(\bv,y\right) \dD \bv'\dD y
\,.
\]
Then, we compute the time derivative of  $\left\|\bar{p}(t)-\bar{q}^\eps(t)\right\|_{L^1\left(\R^d\times V\right)}$ by multiplying the difference between \eqref{eq:p:lim} and \eqref{eq:q:eps:bar} by $\sgn{\left(\bar{p}(t)-\bar{q}^\eps(t)\right)}$ and integrating with respect to $\left(\bx,\bv\right)\in\R^d\times V$; we obtain
\[
\frac{\dD}{\dD t}
\left\|\bar{p}-\bar{q}^\eps\right\|_{L^1\left(\R^d\times V\right)}
\,=\,
\int_{\R^d\times V}
\sgn{\left(\bar{p}-\bar{q}^\eps\right)}\left(
Q\left[\bar{p}\right] 
-
\int_{\R}\Tilde{Q}^\eps\left[q^\eps\right]\dD y
\right)\dD \bx\dD \bv\,.
\]
We decompose the right-hand side as follows
\[
\frac{\dD}{\dD t}
\left\|\bar{p}-\bar{q}^\eps\right\|_{L^1\left(\R^d\times V\right)}
\,=\,
\cL_{11}-\cL_{12}
\,+\,
\cL_{21}-\cL_{22}
\,+\,
\cL_{31}-\cL_{32}
\,,
\]
and carefully detail each contribution. The first term $\cL_{11}-\cL_{12}$ corresponds to the error due to the difference between the external signal $M$ and $N$ defined in \eqref{eq:N}, that is
\begin{equation*}
	\left\{
	\begin{array}{l}
		\displaystyle \cL_{11} =	\int
		\sgn{\left(\bar{p}-\bar{q}^\eps\right)}(\bv)\left(
		\Lambda\left(y-D_t M(\bv'),\bv,\bv'\right)-\Lambda\left(y-D_t N(\bv'),\bv,\bv'\right)\right)\bar{p}\left(\bv'\right)\cM(y)\,
		\dD \bx\dD \bv \dD \bv'\dD y\,
		,\\[1.5em]
		\displaystyle \cL_{12} = 	\int\sgn{\left(\bar{p}-\bar{q}^\eps\right)}(\bv)\left(
		\Lambda\left(y-D_t M(\bv),\bv',\bv\right)-\Lambda\left(y-D_t N(\bv),\bv',\bv\right)\right)\bar{p}\left(\bv\right)\cM(y)\,
		\dD \bx\dD \bv \dD \bv'\dD y\,,
	\end{array}
	\right.
\end{equation*}
we estimate $\cL_{11}$ and $\cL_{12}$ thanks to the convergence of $N$ towards $M$ ensured in Lemma \ref{estimate:N}. The second term $\cL_{21}-\cL_{22}$ corresponds to the propagation of the $L^1$-norm for the limiting model \eqref{eq:p:lim}, that is
\begin{equation*}
	\left\{
	\begin{array}{l}
		\displaystyle \cL_{21} =	\int
		\sgn{\left(\bar{p}-\bar{q}^\eps\right)}(\bv)\Lambda\left(y-D_t N(\bv'),\bv,\bv'\right)\left(\bar{p}-\bar{q}^\eps\right)\left(\bv'\right)\cM(y)\,
		\dD \bx\dD \bv \dD \bv'\dD y\,
		,\\[1.5em]
		\displaystyle \cL_{22} = 	\int
		\sgn{\left(\bar{p}-\bar{q}^\eps\right)}(\bv)\Lambda\left(y-D_t N(\bv),\bv',\bv\right)\left(\bar{p}-\bar{q}^\eps\right)\left(\bv\right)\cM(y)\,
		\dD \bx\dD \bv \dD \bv'\dD y\,,
	\end{array}
	\right.
\end{equation*}
we prove that $\cL_{21}-\cL_{22}$ is non-positive. The last term $\cL_{31}-\cL_{32}$ corresponds to the error due to the difference between $q^\eps$ and $\bar{q}^\eps\cM$, that is
\begin{equation*}
	\left\{
	\begin{array}{l}
		\displaystyle \cL_{31} =	\int
		\sgn{\left(\bar{p}-\bar{q}^\eps\right)}(\bv)\Lambda\left(y-D_t N(\bv'),\bv,\bv'\right)\left(\bar{q}^\eps\cM-q^\eps\right)\left(\bv',y\right)
		\dD \bx\dD \bv \dD \bv'\dD y\,
		,\\[1.5em]
		\displaystyle \cL_{32} = 	\int
		\sgn{\left(\bar{p}-\bar{q}^\eps\right)}(\bv)\Lambda\left(y-D_t N(\bv),\bv',\bv\right)\left(\bar{q}^\eps\cM-q^\eps\right)\left(\bv,y\right)
		\dD \bx\dD \bv \dD \bv'\dD y\,,
	\end{array}
	\right.
\end{equation*}
 we estimate $\cL_{31}-\cL_{32}$ thanks to the convergence of $q^\eps$ toward $\bar{q}^\eps\cM$ ensured by Proposition \ref{energy:estimate}.\\

Taking the absolute value inside the integral in $\cL_{21}$ we obtain
\begin{align*}
\cL_{21}-\cL_{22}\leq
\int
\Lambda\left(y-D_t N(\bv'),\bv,\bv'\right)&\left|\bar{p}-\bar{q}^\eps\right|\cM(\bv',y)\\[0.8em]
&-
\Lambda\left(y-D_t N(\bv),\bv',\bv\right)\left|\bar{p}-\bar{q}^\eps\right|\cM(\bv,y)\,
\dD \bx\dD \bv \dD \bv'\dD y\,,
\end{align*}
and then notice that the right-hand side cancels inverting $\bv$ and $\bv'$ in the first term, that is
\[
\cL_{21}\,-\,\cL_{22}\,\leq\,0\,.
\]

To estimate $\cL_{31}-\cL_{32}$, we invert $\bv$ and $\bv'$ inside $\cL_{31}$ and take the absolute value inside both integrals, this yields
\[
\cL_{31}-\cL_{32}
\,\leq\,
2\int
\Lambda\left(y-D_t N(\bv),\bv',\bv\right)\left|\bar{q}^\eps\cM-q^\eps\right|\left(\bv,y\right)
\dD \bx\dD \bv \dD \bv'\dD y\,.
\]
Thanks to assumption \eqref{hyp:Lambda+} we estimate $\int
\Lambda\left(y-D_t N(t,\bx,\bv),\bv',\bv\right)\dD \bv'$ by its supremum over all $(\bx,\bv,y)\in \R^d\times V\times \R$, this yields
\[
\cL_{31}-\cL_{32}
\,\leq\,
C\left\|\,
q^\eps\,-\,\ols{q}^\eps\,
\mathcal{M}\,
\right\|_{L^1
	\left(
	\R^d\times V\times \R
	\right)
}\,.
\]

We turn to the last two terms $\cL_{11}$ and $\cL_{12}$. We perform the change of variable $y\leftarrow y - D_t M(\bv) + D_t N(\bv)$ in the first term composing $\cL_{12}$, this yields
\[
\cL_{12} =	\int
\sgn{\left(\bar{p}-\bar{q}^\eps\right)}(\bv)\Lambda\left(y-D_t N(\bv),\bv',\bv\right)\bar{p}\left(\bv\right)
\left(\cM\left(y+ D_t M(\bv)- D_t N(\bv) \right)-\cM(y)\right)
\dD \bx\dD \bv \dD \bv'\dD y\,.
\]
Then, we take the absolute value inside the integral, and thanks to assumption \eqref{hyp:Lambda+}, we estimate $\int
\Lambda\left(y-D_t N(t,\bx,\bv),\bv',\bv\right)\dD \bv'$ by its supremum over all $(\bx,\bv,y)\in \R^d\times V\times \R$, which yields
\[
\left|\cL_{12}\right| \leq	C \int
\bar{p}\left(\bv\right)
\int \left|\cM(y+ D_t M(\bv)- D_t N(\bv) )-\cM(y)\right|\dD y\,
\dD \bx\dD \bv\,.
\]
We obtain the same estimate for $\cL_{11}$ following the same computations and therefore, we deduce
\[
\cL_{11}-\cL_{12}\, \leq\,	C \int
\bar{p}\left(\bv\right)
\int \left|\cM\left(y+ D_t M(\bv)- D_t N(\bv) \right)-\cM(y)\right|\dD y\,
\dD \bx\dD \bv\,.
\]
To estimate the integral with respect to $y$ in the preceding right-hand side, we notice, on the one hand, that since $\cM$ is a probability measure, it holds 
\[
\int \left|\cM\left(y+ D_t M(\bv)- D_t N(\bv) \right)-\cM(y)\right|\dD y\,\leq\,2\,,
\]
for all $(t,\bx,\bv)\in\R^+\times \R^d \times V$. On the other hand, thanks to Csizár-Kullback, we also have
\begin{align*}
\int \left|\cM\left(y+ D_t M(\bv)- D_t N(\bv) \right)-\cM(y)\right|&\dD y\,\leq\\[-0.3em]
&\sqrt{2}
\left(\int \cM(y)\,\log{\left(\frac{\cM(y)}{\cM\left(y+ D_t M(\bv)- D_t N(\bv) \right)}\right)}\dD y\right)^{\frac{1}{2}}.
\end{align*}
Hence, after simple computations involving $\cM$, we obtain
\begin{equation*}
\cL_{11}-\cL_{12}\, \leq\,	C \int
\bar{p}\left(\bv\right)\min{\left(1\,,\left|D_t M(\bv)- D_t N(\bv) \right|\right)}
\dD \bx\dD \bv\,.
\end{equation*}
We estimate $D_t M - D_t N$ thanks to \eqref{estimate:N:2} in Lemma \ref{estimate:N} and apply \eqref{moments:v:p} to bound moments of $\bar{p}$ with respect to $\bv$. This yields
\begin{equation*}
	\cL_{11}-\cL_{12}\, \leq\,	C \min{\left(1\,,\left(\eps+e^{-\frac{t}{\eps}}\right)t\right)}\,.
\end{equation*}

Gathering our estimates, we deduce 
\[
\frac{\dD}{\dD t}
\left\|\bar{p}-\bar{q}^\eps\right\|_{L^1\left(\R^d\times V\right)}
\,\leq\,C\left(\left\|\,
q^\eps\,-\,\ols{q}^\eps\,
\mathcal{M}\,
\right\|_{L^1
	\left(
	\R^d\times V\times \R
	\right)
}
+
\min{\left(1\,,\left(\eps+e^{-\frac{t}{\eps}}\right)t\right)}
\right)
\,.
\]
Then, we integrate the preceding relation between $0$ and $t$ and obtain
\[
\left\|\bar{p}(t)-\bar{q}^\eps(t)\right\|_{L^1}
\,\leq\,
\left\|\bar{p}_0-\bar{q}^\eps_0\right\|_{L^1}
+\,
C\left(\int_0^t \left\|\,
q^\eps(s)\,-\,\ols{q}^\eps(s)\,
\mathcal{M}\,
\right\|_{L^1
}\dD s
+
\min{\left(t\,,\eps\,t^2+ \eps^2\right)}
\right)
\,.
\]
Applying Jensen inequality and then Proposition \ref{energy:estimate} to bound the integral in the right-hand side, we obtain
\[
\left\|\bar{p}(t)-\bar{q}^\eps(t)\right\|_{L^1}
\,\leq\,
\left\|\bar{p}_0-\bar{q}^\eps_0\right\|_{L^1}
+\,
C\left(\sqrt{\eps\, t}\,E[q^\eps_0]^{\frac{1}{2}} \,+\,
\sqrt{\eps\, t}\,\left(t \,+\, 1\right)
+
\min{\left(t\,,\eps\,t^2+ \eps^2\right)}
\right)
\,.
\]
Then, we notice that: $\min{\left(t\,,\eps\,t^2+ \eps^2\right)}\leq t\min{\left(1\,,\eps\,t\right)} + \eps^2$, and that: $\min{\left(1\,,\eps\,t\right)} \leq \sqrt{\eps}\,\left(t \,+\, 1\right)/\sqrt{t}$, where to derive the second inequality, we distinguish the cases $t>1/\eps$ and $t\leq1/\eps$ separately. Hence, we deduce the expected result
\[
\left\|\bar{p}(t)-\bar{q}^\eps(t)\right\|_{L^1}
\,\leq\,
\left\|\bar{p}_0-\bar{q}^\eps_0\right\|_{L^1}
+\,
C\left(\sqrt{\eps\, t}\,E[q^\eps_0]^{\frac{1}{2}} \,+\,
\sqrt{\eps\, t}\,\left(t \,+\, 1\right)
+
\eps^2\right)
\,.
\]
\end{proof}
\section{Proof of Theorem \ref{TH:MAIN}}
To prove Theorem \ref{TH:MAIN}, we invert the change of variable \eqref{change:var} in the estimates of Propositions \ref{energy:estimate} and \ref{cv:q:p:bar} in order to translate the results from the re-scaled distribution $q^\eps$ to the initial quantity $p^\eps$. 

Let us start with the first estimate in Theorem \ref{TH:MAIN}. According to Proposition \ref{energy:estimate}, it holds 
\[
\int_0^t \left\|\,
q^\eps(s)\,-\,\ols{q}^\eps(s)\,
\mathcal{M}\,
\right\|^2_{L^1
	\left(
	\R^d\times V\times \R
	\right)
}\dD s\,\leq\,\eps\,\int_{\R^d\times V\times \R}
q^\eps_0
\log{\left(\frac{q^\eps_0}{\cM}\right)}(\bx,\bv,y)\dD \bx\dD \bv \dD y \,+\,
C\,\eps\left(t^2 \,+\, 1\right)\,,
\]
for all times $t\geq0$. Inverting \eqref{change:var} in both left and right-hand sides, the estimate becomes
\[
\int_0^t \left\|\,
p^\eps(s)\,-\,\ols{p}^\eps\,
\mathcal{M}_{\eps,N}(s)\,
\right\|^2_{L^1\left(
	\R^d\times V\times \R
	\right)
}\dD s\,\leq\,\eps\,\int
p^\eps_0
\log{\left(\frac{p^\eps_0}{\cM_{\eps,M_0}}\right)}(\bx,\bv,m)\dD \bx\dD \bv \dD m\,+\,
C\,\eps\left(t^2 \,+\, 1\right)\,,
\]
where $\mathcal{M}_{\eps,N}$ is defined by \eqref{def:M:eps}. To estimate the relative energy on the right-hand side we decompose it as follows
\begin{align*}
\int
p^\eps_0
\log{\left(\frac{p^\eps_0}{\cM_{\eps,M_0}}\right)}\dD \bx\dD \bv \dD m\,=&\,
\int
p^\eps_0
\log{\left(p^\eps_0\right)}\dD \bx\dD \bv \dD m\\[0.8em]
\,+\,
\frac{1}{2}
&\int
\left|\frac{m-M_0(\bx)}{\eps}\right|^2
p^\eps_0
\dD \bx\dD \bv \dD m
-\log{\left(2\pi\right)}+\left|\log{\left(\eps\right)}\right|
\,.
\end{align*}
Both first and second terms on the right-hand side stay uniformly bounded as $\eps$ vanishes thanks to assumptions \eqref{hyp1:p:eps} and \eqref{hyp2:p:eps} respectively. Hence, we deduce 
\[
\int_0^t \left\|\,
p^\eps(s)\,-\,\ols{p}^\eps\,
\mathcal{M}_{\eps,N}(s)\,
\right\|^2_{L^1\left(
	\R^d\times V\times \R
	\right)
}\dD s\,\leq\,
C\,\eps\left(t^2 \,+\, 1\,+\,\left|\log{\left(\eps\right)}\right|\right)\,,\quad\forall\,t \in\R^+\,.
\]
Then, the first estimate in Theorem \ref{TH:MAIN} is obtained taking the square root on both sides of the preceding inequality.

To obtain the second estimate in Theorem \ref{TH:MAIN}, we invert \eqref{change:var} in the estimate of Proposition \ref{cv:q:p:bar}. Since computations are very similar to the ones presented above, we do not detail them. To conclude this proof, the last estimate in Theorem  \ref{TH:MAIN}, which ensures the convergence of $N$ towards $M$, is obtained thanks to \eqref{estimate:N:2} in Lemma \ref{estimate:N:1}.
\section{Conclusion and perspectives}
\label{sec:5}
In this article, we investigated a possible description of the internal mechanisms allowing bacteria to feel gradients. The main point of our analysis is that we obtain strong convergence with explicit rates towards the phenomenological run-and-tumble equation, departing from a model incorporating internal variables. In order to capture the complexity of real-life applications, we proposed a robust method, which allows for general tumbling kernel. Numerous perspectives arise from this work.\\

A natural improvement of our result consists in removing the well-preparedness condition \eqref{hyp2:p:eps}. The same difficulty was successfully dealt with in the context of concentration phenomena for neural networks \cite{BF,Blaustein23strFHN,blaustein_bouin22}. Since \eqref{eq:p} and the model studied in \cite{Blaustein23strFHN} share a similar structure, the arguments developed in \cite{Blaustein23strFHN} might adapt in our context.\\

Another interesting perspective would be to extend our analysis to more general internal dynamics \cite{Erban_Othmer04,Erban_Othmer05}. Nonlinear dynamics should be in the range of our method but in this case, the limiting tumbling kernel $\bar{\Lambda}$ would be given as the convolution product between $\Lambda$ and the stationary solution to a Fokker-Planck equation with internal dynamics dependent coefficients. This would yield interesting insights on the interplay between $\Lambda$, the internal dynamics and the resulting tumbling kernel $\bar{\Lambda}$. It could complement former detailed studies on that matter \cite{Xue15}. Including fast variables could lead to very interesting and biologically relevant questions as it is not clear how scale fast variables compared to $\eps$. Biological experiments  \cite{Sourjik_Berg02,Sourjik04,Jiang_Ouyang_Tu10} provide insights on the multiple time scales involved in the signaling pathways and should therefore constitute the starting point to a further investigation.\\

In line with the last paragraph, it would be of great interest to investigate the scaling in $\eps$ of the diffusion term $\partial_m^2 p^\eps$ in \eqref{eq:p}. This term represents random fluctuations which occur at different levels in the signaling pathways of bacteria \cite{Clausznitzer_Endres11}. Some detailed models exist \cite{Clausznitzer_Endres11} but it does not seem straightforward how the noise intensity scales as $\eps\rightarrow 0$. In the mathematical literature, neglecting random fluctuations completely has been considered in \cite{Perthame_Tang_vauchelet16}. Obtaining strong and quantitative convergence estimates in this case would constitute a challenge. Intermediate situations, where the diffusion scales as $\eps^\alpha \partial_m^2 p^\eps$ with $\alpha>1$, might also yield interesting corrections to the resulting tumbling kernel $\bar{\Lambda}$. Indeed, the formal computations presented in the introduction lead to a corrected $\bar{\Lambda}^\eps$ given by 
\[
\bar{\Lambda}^\eps(m,\bv,\bv')\,=\,
\int_{\R}
\Lambda\left(y-m,\bv,\bv'\right)\frac{1}{\eps^{\alpha-1}}\, \mathcal{M}\left(\frac{y}{\eps^{\alpha-1}}\right)\dD y\,,
\]
where the convolution with the Gaussian distribution vanishes in the limit $\eps\rightarrow 0$, which is consistent with the findings of \cite{Perthame_Tang_vauchelet16}.\\

To conclude, coupling equation \eqref{eq:p} with an evolution equation for $M$ is of primary interest on both the biological and mathematical point of view. On the mathematical point of view, it would lead to an intricate analysis as \eqref{eq:p} would become nonlinear. On the biological point of view, this model would hopefully capture complex patterns observed experimentally \cite{Budrene_Berg91}. Quantifying rigorously these patterns would lead to very interesting developments.

%
\section*{Acknowledgement}
I thank Benoît Perthame for introducing me to the run and tumble model and for enlightening discussions and advises over the content of this article. \\

I also thank Francis Filbet for his precious advises and comments along the elaboration of the manuscript.

\bibliographystyle{abbrv}
\bibliography{refer}
\end{document}